  \newcommand{\Prob}{\mathbb{P}}
\newcolumntype{L}[1]{>{\raggedright\arraybackslash}p{#1}}
\newcolumntype{C}[1]{>{\centering\arraybackslash}p{#1}}
\newcolumntype{R}[1]{>{\raggedleft\arraybackslash}p{#1}}
\begin{document}

\RUNAUTHOR{Boonstra, van Eekelen, and van Leeuwaarden}

\RUNTITLE{Robust knapsack ordering for a partially-informed newsvendor}

\TITLE{
Robust knapsack ordering for a partially-informed newsvendor with  budget constraint}

\ARTICLEAUTHORS{%
\AUTHOR{Guus Boonstra}
\AFF{Retail Consulting Department, IG\&H Consultants, \EMAIL{guus.boonstra@igh.com}} 
\AUTHOR{Wouter J.E.C. van Eekelen}
\AFF{Department of Econometrics and Operations Research, Tilburg University, \EMAIL{w.j.e.c.vaneekelen@tilburguniversity.edu}} 
\AUTHOR{Johan S.H. van Leeuwaarden}
\AFF{Department of Econometrics and Operations Research, Tilburg University,  \EMAIL{j.s.h.vanleeuwaarden@tilburguniversity.edu}}
} 

\ABSTRACT{%

This paper studies the multi-item newsvendor problem with a constrained budget and information about demand limited to its range, mean and mean absolute deviation.
We consider a minimax model that determines order quantities by minimizing the expected overage and underage costs for the worst-case demand distributions.
The resulting optimization problem turns out
to be solvable by a method reminiscent of the greedy algorithm that solves the continuous knapsack problem, purchasing items in order of marginal value. 
This method has lower computational complexity compared to directly solving the model and leads to a simple policy that (i) sorts items based on their marginal effect on the total cost and (ii) determines order quantities according to this ranking until the budget is spent.
%
}


\KEYWORDS{distributionally robust optimization, multi-item newsvendor model, knapsack problem, minimax analysis, inventory management} 

\HISTORY{This paper was first submitted on March 8, 2022.}

\maketitle





\section{Introduction}\label{sec:intro}

The newsvendor model is one of the cornerstones of inventory management, introduced by \citet{arrow1951optimal} for finding the order quantity that minimizes expected costs in view of unknown demand and the trade-off between leftovers and lost sales.
The newsvendor model finds many applications in e.g.~perishable food, fashion and high-tech industries, particularly when the total time span of production and lead times exceeds the market lifetime of a product; see~\citet{nahmias1982perishable} and \cite{fisher1996reducing}. 

Manufacturers and retailers need to decide how to employ the available budget or resources when determining the optimal order quantities of different products.
A budget constraint makes the problem multidimensional---as ordering more of one item leaves less budget for other items---and gives rise to a challenging optimization problem. \citet{Hadley1963} solve this problem with Lagrangian optimization. \citet{abdel2004exact} and  \citet{lau1996newsstand} provide alternative solution methods, \citet{erlebacher2000optimal} establishes closed-form solutions for special demand distributions and  \citet{nahmias1984efficient} develop heuristic solutions. All these works are for the full information setting, where the demand distributions for all items are fully specified. 
In this paper we perform a distribution-free analysis of the multi-item newsvendor problem with budget constraint. This analysis does not rely on full specification of the demand distributions, but only requires for each item knowledge of the mean, mean absolute deviation (MAD) and range. Given this partial demand information, we obtain a robust ordering policy by employing distributionally robust optimization (DRO) methods. 

The newsvendor model in this paper seeks to minimize the expected costs as function of the order quantity. 
The cost function depends on the order quantity, but also on the demand, which is a random variable with some distribution. 
Given the demand distribution, the single-item newsvendor model finds the optimal order quantity that minimizes the expected costs. 
In traditional approaches, the demand distribution is fully specified, so that the expected costs can be calculated, and the optimal order quantity can be determined. 
A {\bf \it robust version} of this problem assumes partial information, and only knows that the demand distribution belongs to some ambiguity set 
that contains all distributions that comply with this partial information. We adopt a minimax strategy that can be viewed as a game between the newsvendor and nature: the newsvendor first picks the order quantity after which nature chooses a demand distribution that maximizes the expected costs.
The goal then becomes to solve this minimax problem.

The way we solve this minimax problem in this paper fits in a much richer class of DRO approaches that first calculate worst-case model performance, over the set of distributions satisfying some partial information, and then optimize against these worst-case circumstances. 
Such DRO techniques found applications in many domains including scheduling \citep{kong2013scheduling,mak2014appointment}, portfolio optimization \citep{popescu2007robust,Delage2010}, pricing \citep{elmachtoub2021value,chen2022distribution,kleerleeuwaarden2022}, complex networks \citep{stegehuis2021robust}, and 
  inventory management \citep{Scarf1958, gallego1992minmax, perakis2008regret, ben2013robust}. A classic distributionally robust approach is due to \cite{Scarf1958}, who considered the single-item newsvendor problem with mean-variance demand information. 
 Scarf was able to derive explicit expressions for the worst-case distribution, and solved the minimax problem to obtain the optimal order quantity. Whether a minimax problem 
 is solvable depends on both the function to be  optimized and the choice of ambiguity set. There are many ways to characterize a set of distributions. In DRO, one can define ambiguity by using distance-based metrics, such as total variation or Kullback-Leibler distance. 
 Another popular class of ambiguity uses summary statistics. The ambiguity set studied in this paper contains all distributions with known mean and MAD. The maximization part of the minimax problem can then be viewed as a semi-infinite linear optimization problem
with three constraints, and an infinite number of variables (all distributions in the ambiguity set). 
In fact, such minimax problems are related to generalized moment bound problems, for which general theory says there exists an extremal distribution solving the maximization part with at most a number of support points equal to the number of moment constraints \citep{rogosinski1958moments}. 
See \cite{rahimian2019distributionally} for overviews of many more DRO applications and techniques.


For the multi-item newsvendor model in this paper, we solve the multi-dimensional minimax problem
with 
a random vector that describes the demand for all items. 
Compared with tractable one-dimensional problems such as the single-item newsvendor model, applying DRO techniques to such problems with multiple random variables might present considerable challenges in terms of computational complexity. For example, given information on the mean and covariance of the demands, the distributionally robust multi-item newsvendor is significantly harder to solve than its single-item counterpart \citep{hanasusanto2015distributionally}.
However, for the multi-item newsvendor model in conjunction with mean-MAD ambiguity, solving the minimax problem becomes tractable, and in fact has an elegant algorithmic solution. The key insight will prove to be that the worst-case demand distribution---the solution to the maximization part of the minimax problem---is identical for any order quantity. As a result, the minimax 
problem  reduces to a known-distribution optimization problem. This known distribution is in fact, for each item, a unique three-point distribution. In turn, the minimization problem with this known (discrete) distribution can be solved using a reduction to a knapsack problem.   

The main contributions of this paper are as follows:
\begin{itemize}
    \item[(i)] {Solution of minimax problem.}
We solve the minimax problem for mean-MAD ambiguity and a budget constraint. We first show that the worst-case scenarios arise when item demands follow specific three-point distributions that comply with the partial demand information. We minimize the associated worst-case costs to obtain a robust ordering policy as the solution to a knapsack problem. As opposed to existing methods for the newsvendor model under full demand information, the knapsack problem leads to an effective closed-form ordering policy, also for scenarios with many items. As such, the present paper further develops DRO theory that uses MAD information to formulate tractable minimax problems. 
\item[(ii)] {Budget consistency.} The robust ordering policy only depends on the minimal, mean and maximal demand for each item. Hence, the worst-case distributions are independent of all other model parameters, which makes the robust ordering policy `budget consistent'. When the budget is increased, 
the orders for the original budget remain unaltered, while only the additional budget is further divided over the items. Such budget consistency 
 is useful because the optimization model needs to be solved only once. That is, for the initial budget value the decision maker can generate an ordered list of items as the solution to the knapsack problem, using only standard spreadsheet software, and this solution is valid for all budget levels. In contrast, most other exact and robust methods for the multi-item newsvendor model do not have this feature, which means that the decision maker has to recompute the optimal policy for each budget level.

 \item[(iii)] {Performance of ordering policy.} Through a range of numerical examples we demonstrate the performance of the knapsack ordering. We draw comparisons with full information settings and other robust approaches that require partial demand information by assessing the so-called expected value of additional information (EVAI). Overall, the performance of the robust policy
only deviates a few percent from the optimal performance with full information availability. 
We also quantify the value of MAD information by comparing the performance with
the situations when only the mean and range of demand is known, and show that MAD indeed provides crucial information for providing good performance. 
In addition, we construct an ordering policy that attains the optimal value of a matching minimin problem which, in conjunction with the optimal value of the minimax problem, yields tight performance guarantees.

\end{itemize}

We next discuss some related literature on the newsvendor model.
\cite{gallego1993distribution} consider the multi-item newsvendor model with budget constraint when the mean and variance of demand is known. 
\cite{gallego1993distribution} extend the ideas in \citet{Scarf1958}  to
obtain an optimization problem that can be solved with Lagrange multiplier techniques, similar to the full information setting with a known distribution. In contrast, our minimax analysis with mean-MAD-range information yields a knapsack ordering policy that  generates a sorted list  and prescribes to sort items successively according to that list, with order sizes equal to the minimal, mean or maximum demand.
Other related works that consider the multi-item newsvendor model under partial information include \citet{vairaktarakis2000robust}, who assumes only the support of demand is known, and \cite{ardestani2016robust} who assume knowledge of partial moments and rephrase the robust optimization problem as a tractable linear program. 
 \citet{natarajan2018asymmetry} assume knowledge of mean, variance and semivariance, for which the newsvendor model is solvable in the single-item setting using a semi-infinite linear program, 
 but largely intractable in the multi-item setting. \citet{natarajan2018asymmetry} therefore consider a relaxation that gives a semidefinite program (SDP) to find a lower bound (which is not tight). 
 \citet{hanasusanto2015distributionally} 
consider mean and covariance knowledge. They prove that the distributionally robust problem is NP-hard but admits a semidefinite programming formulation with an exponential number of inequalities (that grows in the number of items).  \cite{xu2018distributionally} and \cite{natarajan2017reduced} present more tractable bounds for mean-covariance information. 
In the present paper we assume only marginal information is available, since covariance information and other dependency structures are difficult to estimate, and  fixing covariance information often leads to difficult optimization problems with non-intuitive solutions (policies). 
The knapsack ordering policy that we obtain in this paper
deals with the worst-case demand distributions among all demand distributions with a given mean, MAD and range, not conditioning on a specific dependency structure. This approach makes the knapsack ordering policy robust, but also suitable for scarce-data settings, as the mean, MAD and range are relatively easy to estimate. 

Section~\ref{sec:2} introduces the single-item model and the multi-item model with budget, under the traditional assumption of full information about the demand distributions. In Section~\ref{sec:3} we present our main results for the distributionally robust setting with partial information. 
Section~\ref{sec:numm} presents a detailed numerical study that demonstrates the robust policies. 
We present conclusions and several directions for future work in Section~\ref{sec:outlook}.  Supplementary material appears in the Electronic Companion (EC), including several proofs, additional numerical experiments, and model extensions.

\section{Classical newsvendor analysis}\label{sec:2}

We introduce the newsvendor model and several well-known results in Section~\ref{sec:model} for the single-item setting, and in 
Section~\ref{sec:multi} for the multi-item setting  with budget constraint. 

\subsection{Classical single-item setting}\label{sec:model}
Consider an item with purchase price $c$ and selling pricing $p$. The decision maker places an order of size $q$. The demand for items is assumed to be the random variable $D$ with distribution function $F_D(\cdot)$. Unsold items will be salvaged at the end of the period for salvage value $s$ per item. 
The mark-up $m>0$ represents the profit per sold item and satisfies $p=c(1+m)$ and the discount factor $d>0$ captures the loss through $s=(1-d)c$. 

The expected costs consist of two terms: opportunity costs of lost sales and overage costs in case of overstocking. 
This gives the cost function
\begin{equation}\label{eq:ObjFuncUni}
G(q,D) =     \begin{cases}
\begin{array}{ll}
(p-c)(D-q)  \quad \text{if } q \leq D, \\
(c-s)(q-D)  \quad \text{if } q > D.
    \end{array}
\end{cases}
\end{equation} 
The case $q \leq D$ amounts to lost sales and $q > D$ results in overstocking. The objective is to order the quantity $q$ of items that minimizes the expected costs. Let $\E$ denote expectation, and define $\mu=\E[D]$ and $x^+= \max(x,0)$. Write the expected costs as
\begin{equation}\label{eq:ObjFuncUni2}
     C(q) := \E[G(q,D)] = (c-s)q + (p-s)\mathbb{E}(D-q)^+ - (c-s)\mu  = c\left(d(q-\mu) + (m+d)\E(D-q)^+ \right). 
\end{equation} 
To keep notation simple (and without loss of generality) set  $c=1$.
Then, the optimal order quantity
\begin{equation}\label{model:newsvendorSingleItem}
q^*=\argmin_{q \geq 0} C(q) \equiv  \argmin_{q\geq0} dq + (m+d)\mathbb{E}(D-q)^+, 
\end{equation}
is given by 
\begin{equation}
    q^* = \inf\Big\{q \,:\, F(q)\geq \frac{m}{m+d}\Big\}. 
    \label{eq:qU}
\end{equation}
A proof of \eqref{eq:qU} is provided in most standard textbooks on inventory management; see e.g. \citet{Hadley1963,silver1998inventory,nahmias2009production}. 

\subsection{Multi-item setting}\label{sec:multi}
Consider $n$ different items and order $q_i$ units for item $i$ for a given period where $i=1,\dots,n$. For item $i$, the unit purchasing and selling price are $c_i$ and $p_i$ respectively. Possible leftovers will be salvaged at the end of the period for unit salvage value $s_i$. We define the model in terms of the mark-up $m_i>0$ and discount factor $d_i>0$. The mark-up represents the profit per sold unit and the discount factor the loss, i.e.\ $p_i=c_i(1+m_i)$ and $s_i=(1-d_i)c_i$. 
The random demand for item $i$ in one period is represented by the nonnegative random variable $D_i$, distributed according to $F_i(\cdot)$. 

As in the single-item setting, we minimize the expected costs. 
Define the multi-item cost function as
\begin{equation}\label{eq:multicostfunction}
G(\vecc{q},\vec{D}) := \sum_{i=1}^{n} c_i\left(d_i(q_i-D_i) + (m_i+d_i)(D_i-q_i)^+\right). 
\end{equation}
We also introduce the budget constraint 
$\sum_{i=1}^n c_iq_i \leq B$ with $B$ the available budget. The multi-item newsvendor model, with decision vector $\vecc{q}=(q_1,\ldots,q_n)$, is then given by 
\begin{equation}
\begin{aligned}
\min_{\vecc{q}}& \quad C(\vecc{q}):= \E[G(\vecc{q},\vec{D})] = \sum_{i=1}^n c_i\left(d_i (q_i-\mu_i) + (m_i+d_i)\mathbb{E}(D_i-q_i)^+\right)
\\
\textrm{s.t.}& \quad   \sum_{i=1}^n c_i q_i \leq B, \\
& \quad q_i \geq 0, \quad i=1,\dots,n.
\end{aligned}
\label{model:CapNewsModel}
\end{equation}
Its solution, referred to as the optimal ordering policy, will be denoted by $\vecc{q}^*$. 
In the single-item setting the purchase costs had no influence on the objective function, but in the multi-item setting the optimal order quantity is affected by $c_i$. It is well known that model \eqref{model:newsvendorSingleItem} is a convex optimization problem. In \eqref{model:CapNewsModel} we take the summation over $n$ convex functions, which preserves convexity. Moreover, the constraints 
form a convex set, so  that \eqref{model:CapNewsModel} is a convex optimization problem \citep{boyd2004convex}. 

\section{Proposed robust approach}\label{sec:3}
Section~\ref{sec:meanMADorder} presents the robust ordering policy for the single-item setting. 
 This result serves as building block for the robust analysis of the multi-item setting in Section~\ref{subsec:3}, which describes the optimal policy as the solution of a linear program (LP).
 In Section~\ref{sec:allocalgo} we show that this LP can be viewed as a knapsack problem.
 All these results are based on a tight upper bound for the cost function. In Section~\ref{sec:variancebeta} we derive a matching tight lower bound for the cost function.

\subsection{Distribution-free ordering policy for single item}\label{sec:meanMADorder}
Let $\P$ denote a probability distribution, and write $\E_{\Prob}$ for $\E$ to emphasize that the expectation is taken with respect to the distribution $\Prob$ of $D$.
The MAD for random demand $D$ is defined as $\delta :=  \mathbb{E}_\P|D-\mu|$, where $\mu$ is the expected value of $D$. Similar to the variance, the MAD is a measure of dispersion or variability. We mention several properties of MAD in \ref{ec:propertiesMAD}. 
For the random variable $D$ with mean $\mu$, MAD $\delta$, and (bounded) support $[a,b]$, where $0\leq a\leq b<\infty$, the mean-MAD ambiguity set is defined as
$$
\mathcal{P}_{(\mu,\delta)} := \left\{ \mathbb{P} \, | \, \mathbb{E}_{\mathbb{P}}[D] = \mu, \, \mathbb{E}_{\mathbb{P}}|D-\mu|= \delta, \, \text{supp}(D) \subseteq [a,b] \right\}.
$$
We thus assume that the ‘true’ distribution $\tilde{\mathbb{P}}$ of the random demand $D$ is contained in this ambiguity set, that is, $\tilde{\mathbb{P}} \in \mathcal{P}_{(\mu,\delta)}$.

To obtain the robust order quantity, we solve
\begin{equation*}
    \min_q \max_{\mathbb{P} \in \mathcal{P}_{(\mu,\delta)}} dq + (m+d)\mathbb{E}_{\mathbb{P}}(D-q)^+,
\end{equation*}
{for which we first consider 
$\max_{\mathbb{P} \in \mathcal{P}_{(\mu,\delta)}} \mathbb{E}_{\mathbb{P}}(D-q)^+$. }
%
To characterize this tight bound,  we apply a general upper bound for convex functions of a random variable by \citet{ben1972more}. To make this paper self-contained, we provide a proof of the following result in \ref{ec:proofs}. 
\begin{lemma}\label{lemma:meanmad1}
The extremal distribution that solves 
$\max\limits_{\mathbb{P} \in \mathcal{P}_{(\mu,\delta)}} \mathbb{E}_{\mathbb{P}}(D-q)^+$
is a three-point distribution on the values $a,\,\mu$ and $b$ that does not depend on $q$.
\end{lemma}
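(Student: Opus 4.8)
The plan is to exhibit the extremal distribution explicitly and then certify its optimality through a pointwise convexity bound, following the mean-MAD technique of \citet{ben1972more}. The starting observation is that $x \mapsto (x-q)^+$ is convex for every fixed $q$, so it suffices to prove the stronger statement that for \emph{every} convex $f$ and every $\mathbb{P} \in \mathcal{P}_{(\mu,\delta)}$ one has $\mathbb{E}_{\mathbb{P}}[f(D)] \le p_a f(a) + p_\mu f(\mu) + p_b f(b)$, where the weights $p_a, p_\mu, p_b$ are fixed numbers independent of $f$ (and hence of $q$). Since $f(D)=(D-q)^+$ is merely one admissible choice, establishing this inequality together with the fact that a candidate three-point law attains it will prove the lemma.

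First I would record the single identity that makes mean-MAD ambiguity tractable: since $\mathbb{E}_{\mathbb{P}}[D-\mu]=0$, the positive and negative parts of $D-\mu$ have equal expectation, so that $\mathbb{E}_{\mathbb{P}}(D-\mu)^+ = \mathbb{E}_{\mathbb{P}}(\mu-D)^+ = \delta/2$ for \emph{all} $\mathbb{P}\in\mathcal{P}_{(\mu,\delta)}$. These two partial first moments are therefore pinned down by the constraints, not just the full mean. Next I would define the candidate distribution $\mathbb{P}^\star$: the three-point law on $\{a,\mu,b\}$ with masses $p_a = \delta/(2(\mu-a))$, $p_b = \delta/(2(b-\mu))$, and $p_\mu = 1 - p_a - p_b$. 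A short check shows this law has mean $\mu$ and MAD $\delta$, so it lies in $\mathcal{P}_{(\mu,\delta)}$ (nonnegativity of $p_\mu$ is exactly the feasibility bound $\delta \le 2(\mu-a)(b-\mu)/(b-a)$, which holds whenever the ambiguity set is nonempty); crucially, the weights contain no reference to $q$.

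The heart of the argument is the upper bound. I would split the support at $\mu$ and bound $f$ on each side by its secant line: for $x\in[a,\mu]$ convexity gives $f(x)\le \tfrac{\mu-x}{\mu-a}f(a)+\tfrac{x-a}{\mu-a}f(\mu)$, and for $x\in[\mu,b]$ it gives $f(x)\le \tfrac{b-x}{b-\mu}f(\mu)+\tfrac{x-\mu}{b-\mu}f(b)$. Taking expectations over the two regions and summing, the coefficients of $f(a)$ and $f(b)$ collapse to $\mathbb{E}_{\mathbb{P}}(\mu-D)^+/(\mu-a)=p_a$ and $\mathbb{E}_{\mathbb{P}}(D-\mu)^+/(b-\mu)=p_b$ by the identity above. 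The coefficient of $f(\mu)$, however, involves the \emph{unknown} quantities $\mathbb{P}(D\le\mu)$ and the truncated means $\mathbb{E}_{\mathbb{P}}[D\mathbf{1}_{\{D\le\mu\}}]$, $\mathbb{E}_{\mathbb{P}}[D\mathbf{1}_{\{D>\mu\}}]$, and the main obstacle is to show these nuisance terms cancel. Expressing the truncated means through the half-MAD identities makes the dependence on $\mathbb{P}(D\le\mu)$ drop out, leaving exactly $p_\mu$, and hence $\mathbb{E}_{\mathbb{P}}[f(D)] \le p_a f(a) + p_\mu f(\mu) + p_b f(b)$ for every convex $f$.

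Finally I would note the bound is tight: its right-hand side is precisely $\mathbb{E}_{\mathbb{P}^\star}[f(D)]$ for the candidate law $\mathbb{P}^\star$, which lies in the ambiguity set, so the inequality is attained and $\mathbb{P}^\star$ is extremal. Because $\mathbb{P}^\star$ was constructed with $q$-free weights on the $q$-free support $\{a,\mu,b\}$, the same distribution solves $\max_{\mathbb{P}}\mathbb{E}_{\mathbb{P}}(D-q)^+$ simultaneously for all $q$, which is the assertion of the lemma. I expect the only delicate point to be the cancellation of the conditional-mean terms in the $f(\mu)$ coefficient; everything else is bookkeeping.
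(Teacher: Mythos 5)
Your proof is correct, but it takes a genuinely different route from the one in the paper. The paper proves Lemma~\ref{lemma:meanmad1} via semi-infinite LP duality (following \citealp{vaneekelen2021mad}): it writes the moment problem \eqref{test3}, constructs the dual majorant $M(x)=\lambda_0+\lambda_1 x+\lambda_2|x-\mu|$ touching $f$ at $a$, $\mu$, $b$, and reads off the support from complementary slackness before solving the $3\times 3$ linear system for the probabilities. You instead run the primal Edmundson--Madansky argument: split $[a,b]$ at $\mu$, bound $f$ by its secants on each piece, and use the half-MAD identity $\E_{\P}(D-\mu)^+=\E_{\P}(\mu-D)^+=\delta/2$ to show the secant coefficients collapse to $q$-free weights. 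This is in fact the original argument of \citet{ben1972more}, which the paper explicitly mentions in its first sentence before opting for the dual route. The one step you flag as delicate --- cancellation of the nuisance terms in the $f(\mu)$ coefficient --- is actually immediate: writing $\tfrac{x-a}{\mu-a}=1-\tfrac{\mu-x}{\mu-a}$ and $\tfrac{b-x}{b-\mu}=1-\tfrac{x-\mu}{b-\mu}$, the coefficient becomes $\P(D\le\mu)+\P(D>\mu)-p_a-p_b=1-p_a-p_b=p_\mu$, with no truncated means surviving. What each approach buys: yours is elementary and self-contained (no duality, no need to verify that the majorant dominates $f$ globally, and the extremal probabilities fall out of the computation rather than from solving a linear system), whereas the paper's dual formulation extends more smoothly to the companion lower bound of Lemma~\ref{lemma:meanmadbeta}, where the additional constraint $\P(D\ge\mu)=\beta$ introduces a jump discontinuity in $M(x)$ and a secant-based argument would be harder to adapt. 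Both proofs deliver the same extremal law \eqref{eq:worstcase3pointdist} and the same key conclusion that it is independent of $q$.
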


From the proof of Lemma~\ref{lemma:meanmad1}, it follows that the worst-case probability distribution of $D$, the extremal distribution that solves $\max_{\mathbb{P} \in \mathcal{P}_{(\mu,\delta)}}\mathbb{E}_{\mathbb{P}}(D-q)^+ $, is a three-point distribution defined as 
\begin{equation}\label{eq:worstcase3pointdist}
    \mathbb{P}(D=x) = \left\{
    \begin{array}{ll}
        \dfrac{\delta}{2(\mu-a)}, & \text{for }  x = a, \\
        1- \dfrac{\delta}{2(\mu-a)}- \dfrac{\delta}{2(b-\mu)}, & \text{for }   x=\mu, \\
        \dfrac{\delta}{2(b-\mu)}, & \text{for }  x = b. \\
    \end{array}
\right.
\end{equation}

Applying this worst-case distribution, the robust order quantity follows from solving $q^U=\argmin_q C^U(q)$ with
\begin{equation}\label{model:newsvendorSingleMeanMad}
\begin{aligned}
    C^U(q) &  :=  d(q-\mu)  + \dfrac{\delta(m+d)}{2(\mu-a)}(a-q)^+ +  (m+d)\left(1- \dfrac{\delta}{2(\mu-a)}- \dfrac{\delta}{2(b-\mu)}  \right)(\mu-q)^+ \\ & + \dfrac{\delta(m+d)}{2(b-\mu)}(b-q)^+.  
\end{aligned}
\end{equation}
To illustrate the mean-MAD bound and robust order quantity $q^U$, consider an example in which $D$ is distributed according to a beta distribution with both shape parameters set to 1. For a general beta distribution, $a=0$ and $b=1$. In Figure~\ref{fig:exMeanMadSingle1}, we have $m = 1$ and $d=0.8$. This leads to $q^{U} = \mu $. In Figure~\ref{fig:exMeanMadSingle2}, the mark-up increases to $m=3$. In this case the mean-MAD order quantity increases to $q^{U} = b$. 
\begin{figure}[h]
\centering
\begin{subfigure}{.5\textwidth}
  \centering
  \includegraphics[width=\linewidth]{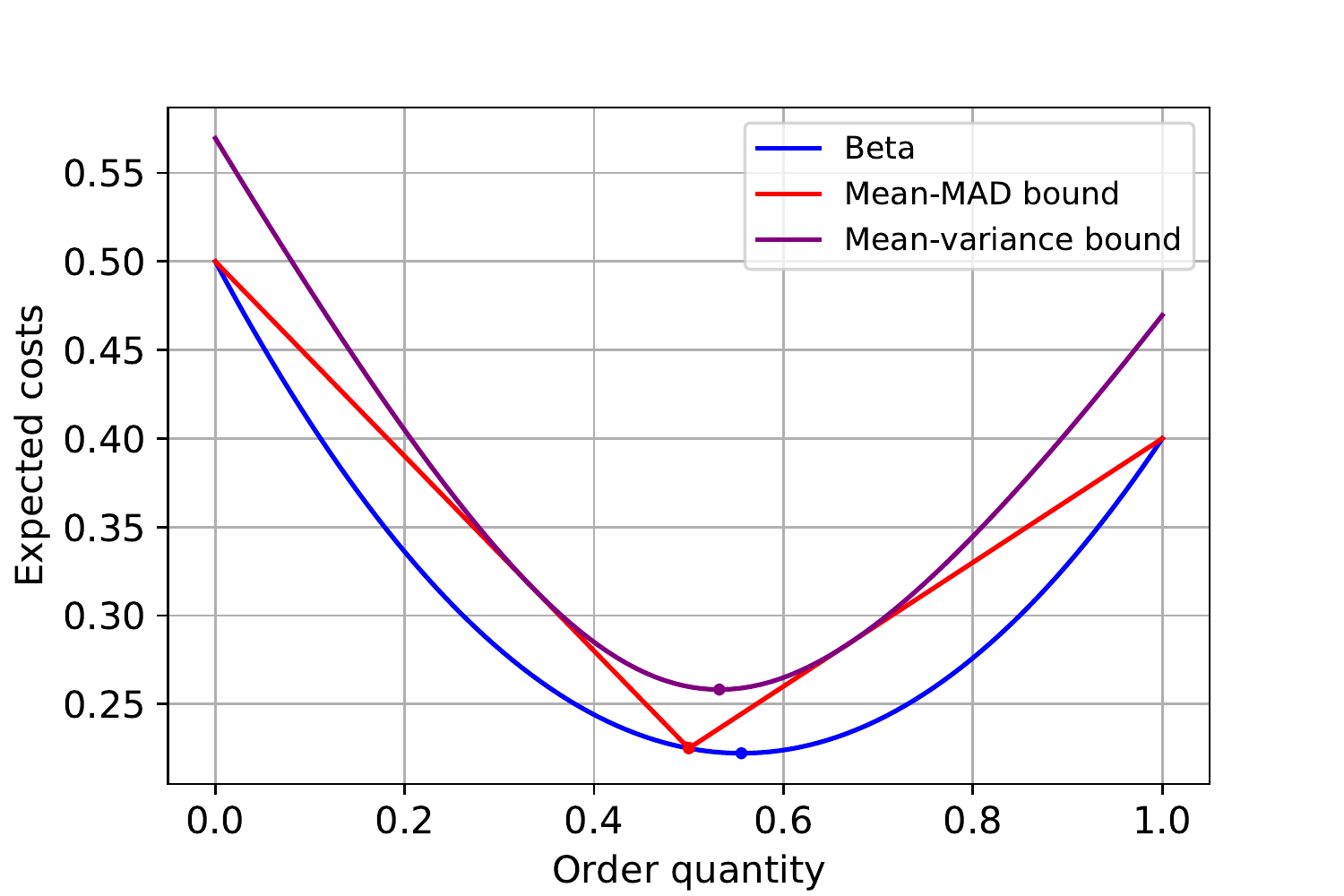}
    \caption{$m=1$}
  \label{fig:exMeanMadSingle1}
\end{subfigure}%
\begin{subfigure}{.5\textwidth}
  \centering
  \includegraphics[width=\linewidth]{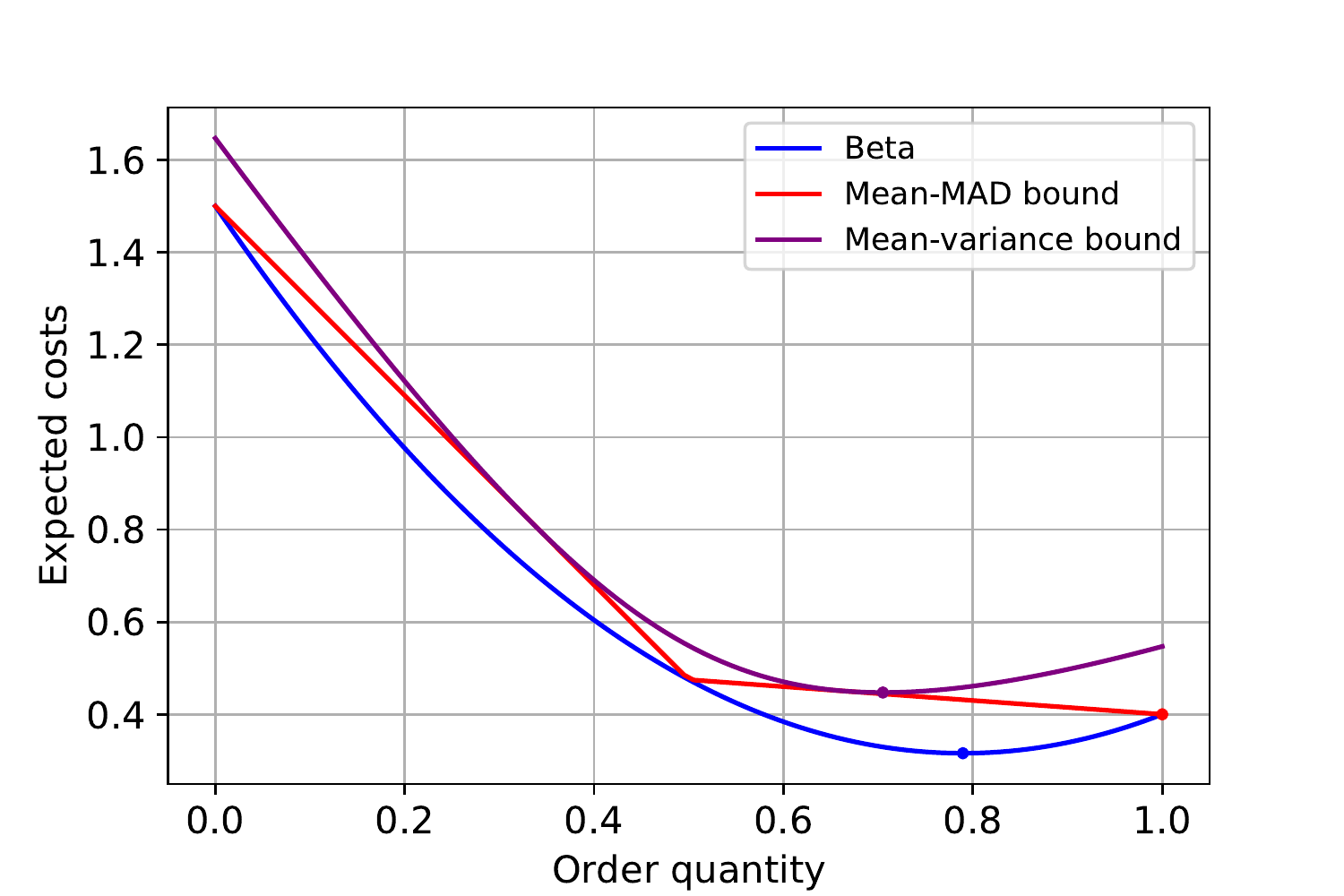}
  \caption{$m=3$}
  \label{fig:exMeanMadSingle2}
\end{subfigure}
\caption{Mean-MAD and mean-variance bounds and corresponding ordering policies. The upper curve corresponds to the mean-variance upper bound that follows from $\cP_{(1/2,1/12)}$. The middle curve depicts the mean-MAD upper bound. The ‘true’ cost function assumes that $D$ follows a beta distribution with both shape parameters equal to 1 (the lower curve).}
\end{figure}
When computing this upper bound, observe that the mean-MAD bound touches the ‘true’ cost function in the points $a,\mu$ and $b$. This property actually holds in general. Clearly, for $q=a$ or $b$, it holds that $C^U(q)=C(q)$. When $q=\mu$, the cost function equals
$$
C(\mu)= d(\mu-\mu)+(m+d)\E(D-\mu)^+ =   \dfrac{\delta(m+d)}{2} = C^U(\mu),
$$
since $\E(D-\mu)^+=\E|D-\mu|/2$.

By analyzing \eqref{model:newsvendorSingleMeanMad} one can obtain an explicit ordering rule for $q^U$. The objective function of \eqref{model:newsvendorSingleMeanMad} is composed of piecewise linear functions. 
By exploiting this structure, we can construct an explicit ordering policy. 
For scalars $\alpha_1,\dots,\alpha_m,\nu_1,\dots,\nu_m\in\mathbb{R}$,
$f(x) = \max_{i=1,\dots,m}\{\alpha_i x + \nu_i\}$ denotes a convex, piecewise linear function.
The function $C^U(q)$ in~\eqref{model:newsvendorSingleMeanMad} admits a representation of the form
$$
\begin{aligned}
C^U(q) =  d(q-\mu) + (m+d)\E(D-q) 
    =  m(\mu-q) \eqqcolon f_0(q),
\end{aligned}   
$$
for $q\in[0,a)$ and 
\begin{equation*}
\begin{aligned}
C^U(q) &=  d(q-\mu) + (m+d)\left(1- \dfrac{\delta}{2(\mu-a)}- \dfrac{\delta}{2(b-\mu)}  \right)(\mu-q) + \dfrac{\delta(m+d)}{2(b-\mu)}(b-q) \\
    &= q(\dfrac{\delta(m+d)}{2(\mu-a)}-m) + \nu_1 \eqqcolon f_1(q),
\end{aligned}    
\end{equation*}
for  $q \in [a, \mu)$, 
where $\nu_1$ is some constant value. 
For $q \in [a,\mu)$, the mean-MAD objective function is defined by the linear function $f_1(q)$. For the interval $q \in [\mu,b]$, we obtain
\begin{equation*}
     C^U(q) =  d(q-\mu) + \dfrac{\delta(m+d)}{2(b-\mu)}(b-q) = q\left(d- \dfrac{\delta(m+d)}{2(b-\mu)}  \right) + \nu_2 \eqqcolon f_2(q)
\end{equation*}
for some constant $\nu_2$. The cost function is thus the pointwise maximum of the three linear functions $f_0(q)$, $f_1(q)$ and $f_2(q)$:
\begin{equation*}\label{replin3}
    C^U(q) = \max\left\{f_0(q),\,f_1(q),\,f_2(q)\right\}.
\end{equation*}
Since $C^U(q)=\max_{j=0,1,2} \{\alpha_{j} q + \nu_{j}\} $ is a convex function, it holds that $\alpha_{0} \leq \alpha_{1} \leq \alpha_{2}$. 
 Since we assume that $m > 0$, we know that $\alpha_0 < 0$.
Therefore, from the derivatives $\alpha_{1}$, $\alpha_{2}$ of $C^{U}(q)$, we can derive an explicit order quantity by examining for which linear piece the slope turns positive. This allows us to state Theorem~\ref{thm:singleMadStrategy}.

\begin{theorem}[Mean-MAD order quantity]\label{thm:singleMadStrategy}
The robust order quantity $q^U\in\argmin_{q}C^U(q)$ is given by
\begin{enumerate}[label=\normalfont(\alph*)]
     \item If $m < \dfrac{\delta d}{2(\mu-a)-\delta}    $, then  $q^U= a$. 
    \item If $\dfrac{\delta d}{2(\mu-a)-\delta} < m <  \dfrac{d(2(b-\mu) - \delta)}{\delta}$, then  $q^U = \mu$. 
\item If $ \dfrac{d(2(b-\mu) - \delta)}{\delta} < m $, then $q^U = b$.
\item If $m=\dfrac{\delta d}{2(\mu-a)-\delta}$ and $m=\dfrac{d(2(b-\mu) - \delta)}{\delta}$, then $q^U \in [a,\mu]$ and $q^U \in[\mu,b]$, respectively.
\end{enumerate}
\end{theorem}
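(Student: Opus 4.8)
The plan is to read off the minimizer directly from the convex piecewise-linear structure that the preceding paragraphs have already exposed. Since $C^U(q)=\max\{f_0(q),f_1(q),f_2(q)\}$ is convex with ordered slopes $\alpha_0\le\alpha_1\le\alpha_2$, and the three pieces are active on the consecutive intervals $[0,a)$, $[a,\mu)$ and $[\mu,b]$ respectively (with $C^U(q)=d(q-\mu)$, of slope $d>0$, for $q>b$), the only candidate minimizers are the two breakpoints $q=a$, $q=\mu$ and the endpoint $q=b$. I would invoke the standard fact that a convex function attains its minimum at the point where its slope first changes from nonpositive to nonnegative. Because $\alpha_0=-m<0$, the function is always strictly decreasing on $[0,a)$, so the location of the minimum is governed entirely by the signs of $\alpha_1$ and $\alpha_2$.

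The core of the argument is then a short sign analysis. If $\alpha_1\ge 0$, then $C^U$ is nondecreasing on $[a,b]$ while strictly decreasing on $[0,a)$, giving $q^U=a$; if $\alpha_1<0<\alpha_2$, then $C^U$ decreases on $[0,\mu)$ and increases on $(\mu,b]$, giving $q^U=\mu$; and if $\alpha_2\le 0$, then $C^U$ decreases throughout $[0,b]$ and increases for $q>b$, giving $q^U=b$. It remains to convert these slope conditions into the thresholds on $m$ stated in the theorem. For the first, $\alpha_1>0\iff\delta(m+d)>2m(\mu-a)\iff\delta d>m\bigl(2(\mu-a)-\delta\bigr)$, and dividing by $2(\mu-a)-\delta$ yields $m<\tfrac{\delta d}{2(\mu-a)-\delta}$, which is case (a). Symmetrically, $\alpha_2<0\iff\delta(m+d)>2d(b-\mu)\iff\delta m>d\bigl(2(b-\mu)-\delta\bigr)$, giving $m>\tfrac{d(2(b-\mu)-\delta)}{\delta}$, which is case (c). Case (b) is precisely the complementary regime $\alpha_1\le 0$ and $\alpha_2\ge 0$, and case (d) records the two boundary situations in which one slope vanishes: when $\alpha_1=0$ the piece $f_1$ is flat and any $q\in[a,\mu]$ minimizes $C^U$, and when $\alpha_2=0$ any $q\in[\mu,b]$ does.

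The step requiring the most care is the legitimacy of dividing by $2(\mu-a)-\delta$ and by $\delta$ without reversing the inequalities, which presupposes these quantities are positive. I would obtain this from the feasibility of the ambiguity set $\mathcal{P}_{(\mu,\delta)}$: nonnegativity of the three worst-case weights in \eqref{eq:worstcase3pointdist}, in particular of the mass at $\mu$, forces $\tfrac{\delta}{2(\mu-a)}\le 1$ and $\tfrac{\delta}{2(b-\mu)}\le 1$, i.e.\ $\delta\le 2(\mu-a)$ and $\delta\le 2(b-\mu)$, so both denominators are nonnegative (and strictly positive in the nondegenerate case). The same two inequalities also guarantee that the two thresholds $\tfrac{\delta d}{2(\mu-a)-\delta}$ and $\tfrac{d(2(b-\mu)-\delta)}{\delta}$ are correctly ordered, so that the regimes (a)--(c) partition the admissible range of $m$ up to the boundary equalities of (d). With these feasibility facts in hand, the conversion of slope signs into the stated bounds on $m$ is routine, and the three cases exhaust all possibilities, completing the proof.
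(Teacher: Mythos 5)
Your argument is correct and follows essentially the same route as the paper, which likewise reads the minimizer off the convex piecewise-linear representation $C^U(q)=\max\{f_0(q),f_1(q),f_2(q)\}$ with ordered slopes $\alpha_0\le\alpha_1\le\alpha_2$, $\alpha_0<0$, and locates the point where the slope first turns nonnegative, translating the sign conditions on $\alpha_1,\alpha_2$ into the stated thresholds on $m$. The only small imprecision is that the correct ordering of the two thresholds requires the full feasibility condition $\tfrac{\delta}{2(\mu-a)}+\tfrac{\delta}{2(b-\mu)}\le 1$ (nonnegativity of the mass at $\mu$, equivalently $\delta\le 2(\mu-a)(b-\mu)/(b-a)$), not merely the two separate inequalities $\delta\le 2(\mu-a)$ and $\delta\le 2(b-\mu)$; since you explicitly invoke nonnegativity of that middle mass, this is immaterial.
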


According to Theorem~\ref{thm:singleMadStrategy}, the robust order quantity $q^U$ for mean-MAD-range information consists of three predictable values (minimal, mean, maximum demand) that do not depend on the mark-up $m$ and discount factor $d$, whereas the conditions that dictate how much to order do depend on them (in addition to the demand mean, MAD and range).


\subsection{Multiple items and budget constraint}\label{subsec:3}

A distribution-free analysis of the multi-item model requires a multivariate ambiguity set. As in the single-item case, the partial information is the mean $\mu_i$, MAD $\delta_i$ and support  $\operatorname{supp}(D_i) = [a_i,b_i]$ for each random variable $D_i$, $i=1,\ldots,n$. The mean-MAD ambiguity set is defined as
\begin{equation}\label{eq:meanmadambiguity}
\mathcal{P}_{(\mu, \delta)} := \left\{\mathbb{P} \, | \, \mathbb{E}_{\mathbb{P}}\left(D_{i}\right)=\mu_{i}, \, \mathbb{E}_{\mathbb{P}}\left|D_{i}-\mu_{i}\right|=\delta_{i}, \,  \operatorname{supp}\left(D_{i}\right)\subseteq\left[a_{i}, b_{i}\right], \, \forall i\right\}.
\end{equation}
We henceforth assume that the distribution of the vector of random variables $\vec{D} = (D_1,\dots,D_n)$ belongs to this ambiguity set, i.e., $\mathbb{P} \in \mathcal{P}_{(\mu, \delta)}$. Since the objective function in \eqref{model:CapNewsModel} is separable, one can apply the single-item bound to each term $\mathbb{E}\left( D_i - q_i\right)^+$ in the summation individually. The following result, for the multi-item problem, is then a direct consequence of Lemma~\ref{lemma:meanmad1}. 

\begin{lemma}\label{lemma:meanmad}
The extremal distribution that solves 
$\max\limits_{\mathbb{P} \in \mathcal{P}_{(\mu,\delta)}} \expectp{G(\vecc{q},\vec{D})}$
consists for each $D_i$ of a three-point distribution with values $\xi_1^{(i)} = a_i$, $\xi_2^{(i)}  = \mu_i$, $\xi_3^{(i)}  = b_i $ and probabilities
\begin{equation}
\label{eq:BenTal_probabilities_multivariable}
p_{1}^{(i)}  = \frac{\delta_i}{2(\mu_i - a_i)}, \quad p_{2}^{(i)}  = 1- \frac{\delta_i}{2(\mu_i - a_i)}  - \frac{\delta_i}{2(b_i - \mu_i)} , \quad p_{3}^{(i)}  = \frac{\delta_i}{2(b_i - \mu_i)}.
\end{equation} 
\end{lemma}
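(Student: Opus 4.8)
The plan is to reduce the multivariate maximization to $n$ independent univariate problems, each already settled by Lemma~\ref{lemma:meanmad1}, exploiting the separable structure of $G$ in \eqref{eq:multicostfunction} together with the fact that the ambiguity set $\mathcal{P}_{(\mu,\delta)}$ in \eqref{eq:meanmadambiguity} imposes no constraints coupling the marginals.

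First I would take the expectation of \eqref{eq:multicostfunction} under an arbitrary $\mathbb{P} \in \mathcal{P}_{(\mu,\delta)}$. Because each summand $c_i(d_i(q_i - D_i) + (m_i+d_i)(D_i-q_i)^+)$ depends on $\vec{D}$ only through its $i$-th coordinate, and because $\mathbb{E}_{\mathbb{P}}[D_i] = \mu_i$ is fixed across the whole ambiguity set, the linear part contributes the constant $\sum_i c_i d_i(q_i - \mu_i)$, independent of $\mathbb{P}$. Hence
\begin{equation*}
\max_{\mathbb{P} \in \mathcal{P}_{(\mu,\delta)}} \expectp{G(\vecc{q},\vec{D})} = \sum_{i=1}^n c_i d_i(q_i - \mu_i) + \max_{\mathbb{P} \in \mathcal{P}_{(\mu,\delta)}} \sum_{i=1}^n c_i(m_i+d_i)\,\mathbb{E}_{\mathbb{P}}(D_i-q_i)^+,
\end{equation*}
so the maximization only has to be carried out on the second term.

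Next I would argue that this remaining maximization decouples across items. Each expectation $\mathbb{E}_{\mathbb{P}}(D_i-q_i)^+$ depends on $\mathbb{P}$ only through the marginal law of $D_i$, while the constraints defining $\mathcal{P}_{(\mu,\delta)}$ are stated marginal-by-marginal, with no covariance or joint restriction. Consequently, given any collection of feasible marginals $\mathbb{P}_i \in \mathcal{P}_{(\mu_i,\delta_i)}$, one can always realize them simultaneously by a single joint law (for instance the product measure), so that the maximum of the sum equals the sum of the maxima:
\begin{equation*}
\max_{\mathbb{P} \in \mathcal{P}_{(\mu,\delta)}} \sum_{i=1}^n c_i(m_i+d_i)\,\mathbb{E}_{\mathbb{P}}(D_i-q_i)^+ = \sum_{i=1}^n c_i(m_i+d_i) \max_{\mathbb{P}_i \in \mathcal{P}_{(\mu_i,\delta_i)}} \mathbb{E}_{\mathbb{P}_i}(D_i-q_i)^+.
\end{equation*}
Each univariate maximum on the right is solved by Lemma~\ref{lemma:meanmad1}, whose extremal distribution is exactly the three-point law on $a_i,\mu_i,b_i$ with the weights in \eqref{eq:worstcase3pointdist}; relabeling these as $\xi_k^{(i)}$ and $p_k^{(i)}$ gives the stated probabilities, and since $c_i(m_i+d_i)>0$ the nonnegative weighting does not alter the maximizer.

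The only real subtlety, and the step I would be most careful to spell out, is the interchange of maximum and sum. It is not a generic inequality but an equality, and it hinges entirely on the absence of coupling constraints in $\mathcal{P}_{(\mu,\delta)}$: the bound $\max \sum \le \sum \max$ is automatic, while the reverse follows from the realizability of any tuple of feasible marginals as a single joint law. I would also emphasize that this argument pins down only the \emph{marginals} of the worst-case $\mathbb{P}$, not its dependence structure—any coupling of the $n$ three-point marginals is equally extremal—which is precisely why the statement is phrased ``for each $D_i$'' rather than as a unique joint distribution.
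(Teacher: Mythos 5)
Your proposal is correct and follows essentially the same route as the paper, which simply observes that the objective is separable and applies the single-item bound of Lemma~\ref{lemma:meanmad1} to each term $\mathbb{E}(D_i-q_i)^+$ individually. You merely make explicit the two points the paper leaves implicit---that the linear part is constant over the ambiguity set, and that the max--sum interchange is an equality because the marginal-by-marginal constraints let any tuple of extremal marginals be realized by a product measure---so no further comment is needed.
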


For the multi-item newsvendor model based on mean-MAD ambiguity, we use Lemma~\ref{lemma:meanmad} to solve the maximization part of
\begin{equation}\label{model:DROMulti}
\min_{\vecc{q}:\sum_i c_iq_i\leq B,q_i\geq0}\, \max_{\mathbb{P} \in \mathcal{P}_{(\mu,\delta)}} \mathbb{E}_{\P}\Big[ \sum_{i=1}^n c_id_i(q_i-\mu_i) + c_i(m_i+d_i)\left( D_i - q_i\right)^+\Big],
\end{equation}
and obtain
\begin{equation}\label{model:ConstrainedMultiMeanMAD}
\begin{aligned}
\min_{\vecc{q}} \quad & \sum_{i=1}^n c_i\left(d_i(q_i-\mu_i) + (m_i+d_i)\left(p^{(i)}_{1}\left(a_i-q_i \right)^+ + p^{(i)}_{2}\left(\mu_i-q_i \right)^+ + p^{(i)}_{3}\left(b_i-q_i \right)^+\right)  \right)
\\
\textrm{s.t.} \quad &  \sum_{i=1}^nc_iq_i \leq B,
\\ & q_i \geq 0, \quad i=1,\dots,n.
\end{aligned}
\end{equation}
The objective function of \eqref{model:ConstrainedMultiMeanMAD}  has a piecewise linear structure. Moreover, because of this result and since the constraints are linear, \eqref{model:ConstrainedMultiMeanMAD} can be cast as a linear program (LP). 
In particular, as explained below, the robust ordering policy $\vecc{q}^U$ can be found by solving
\begin{equation}\label{model:ConstrainedLPMultiMeanMAD}
\begin{aligned}
\min_{\vecc{q}} \quad & \sum_{i=1}^n \max_{j=0,1,2} \{\alpha_{i,j} q_i + \nu_{i,j}\} 
\\
\textnormal{\textrm{s.t.}} \quad &  \sum_{i=1}^nc_iq_i \leq B,
\\ & q_i \geq 0, \quad i=1,\dots,n , 
\end{aligned}
\end{equation}
where
\begin{gather*}
    \alpha_{i,0} = -c_im_i , \qquad \nu_{i,0} = c_i m_i \mu_i,   \\
    \alpha_{i,1} = c_i\left(\dfrac{\delta_i(m_i+d_i)}{2(\mu_i - a_i)} - m_i\right), \quad   \nu_{i,1} = c_i(m_i+d_i)\left(\mu_i - \dfrac{\delta_i a_i}{2(\mu_i-a_i)}\right)-c_i d_i \mu_i,    \\
    \alpha_{i,2} = c_i\left(d_i - \dfrac{\delta_i(m_i+d_i)}{2(b_i - \mu_i)}\right) , \quad \nu_{i,2} = \dfrac{c_i\delta_i(m_i+d_i)b_i}{2(b_i-\mu_i)}-c_i d_i \mu_i, \quad \text{\rm for }  i=1,\dots,n. 
\end{gather*}
Let $f_{i,j}(x) = \alpha_{i,j}x + \nu_{i,j}$ for $i=1,\dots,n$ and $j=0,1,2$. From the single-item case, we know that the objective, for each item $i$, can be written as $ \max_{j=0,1,2} \{f_{i,j}(q_i)\}$ with $\alpha_{i,0} \leq \alpha_{i,1} \leq \alpha_{i,2}$,
and thus the objective functions of \eqref{model:ConstrainedMultiMeanMAD} and \eqref{model:ConstrainedLPMultiMeanMAD} are equal, which makes the two models equivalent. Since we know from linear programming theory that convex, piecewise linear objective functions can be written as linear constraints, problem \eqref{model:ConstrainedLPMultiMeanMAD} admits an LP representation \citep{boyd2004convex}.

\subsection{Knapsack algorithm}\label{sec:allocalgo}
It turns out that problem \eqref{model:ConstrainedLPMultiMeanMAD} is intimately related to the continuous knapsack problem, thus making available efficient sorting-based algorithms to solve \eqref{model:ConstrainedLPMultiMeanMAD}. We next describe an efficient algorithm that determines the robust ordering policy.

Define the linear funtion $f_{i,j}$ for each item $i$, and let $\alpha_{i,j}$ represent its derivative with respect to $q_i$, for items $i=1,\dots,n$ and linear pieces $j=0,1,2$. That is, 
\begin{equation*}
   \dfrac{\d f_{i,j}(q_i) }{\d q_i} = \alpha_{i,j}.
\end{equation*}

For each item $i$, $f_{i,0}$, $f_{i,1}$ and $f_{i,2}$ represent the marginal effect on the value of \eqref{model:ConstrainedLPMultiMeanMAD} when we increase $q_i$ to $a_i, \mu_i$ and $b_i$ respectively. The parameter $\alpha_{i,j}$ represents the slope of these linear functions and an order quantity is increased only when $\alpha_{i,j} < 0$, because otherwise it will not reduce the expected costs. We consecutively allocate budget to the item that causes the largest relative decrease in expected costs; that is, item $k$ with the smallest negative derivative $\alpha_{k,i}$ relative to its cost $c_k$. Define the set of all items as $N= \{1,\dots,n\}$.
Since only order quantities that decrease the expected costs are considered, define the  ordered set:
\begin{equation}\label{orderStrategy}
        \mathcal{G} := \{(i,j) \mid \alpha_{i,j} < 0, i\in N,  j \in \{0,1,2\} \},
\end{equation}
where the ordering is determined according to the value of $\alpha_{i,j}/c_i$. For $m = |\mathcal{G}|$, this ordering is represented by the sequence $(i_1,j_1),\dots,(i_{m},j_{m})$ for which it holds that
$\alpha_{i_1,j_1}/c_{i_1} \leq \cdots \leq \alpha_{i_m,j_m}/c_{i_m}$. 
Here $\mathcal{G}$ contains tuples $(i,j)$ for which $i$ represents an item in the newsvendor model and $j$ a linear piece of the piecewise function. As these functions are convex, the linear pieces appear for each item $i$ in increasing order in the set  $\mathcal{G}$. We can now state the knapsack algorithm for the distribution-free multi-item newsvendor model.  \\

\begin{algoritme}[Knapsack algorithm]\label{Algorithm1}
\rm For a budget level $B\geq0$, the ordering policy $\vecc{q}^U$ is found by the following procedure:

\begin{enumerate}[label=\normalfont(\roman*)]
\item Initialize by setting $\vecc{q}= (0,\dots,0)$, and construct $\mathcal{G}$. Continue to {\normalfont(ii)}.

\item 
Select the first element $(i,j)\in \mathcal{G}$. If the set $\mathcal{G}$ is empty, the optimal solution is $\vecc{q}^U=\vecc{q}$. Otherwise, continue to {\normalfont(iii)}.
\item 
If $j=0$, set $q_i = a_i$.
If $j=1$, set $q_i = \mu_i$.
If $j=2$, set $q_i = b_i$. 
Continue to {\normalfont(iv)}.
\item 
Determine whether the budget constraint $\sum_{i=1}^n c_iq_i \leq B$ is violated. If so, set $q_i$ such that $c_iq_i = B- \sum_{k\in N | k \neq i} c_kq_k $,
and the optimal solution is $\vecc{q}^U=\vecc{q}$. Otherwise, remove element $(i,j)$ from $\mathcal{G}$ and return to step {\normalfont(ii)}.
\end{enumerate}
\end{algoritme}

This algorithm yields an optimal solution to \eqref{model:ConstrainedLPMultiMeanMAD}, as asserted in the following theorem.

\begin{theorem}[Knapsack ordering policy]\label{thm:AllocationAlgorithm}
The robust ordering policy $\vecc{q}^U$ that solves the multi-item newsvendor model {\normalfont\eqref{model:ConstrainedLPMultiMeanMAD}} is determined by {\normalfont Algorithm~\ref{Algorithm1}}.
\end{theorem}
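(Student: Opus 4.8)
The plan is to recognize \eqref{model:ConstrainedLPMultiMeanMAD} as a continuous (fractional) knapsack problem with item-internal precedence constraints, and then to show that Algorithm~\ref{Algorithm1} implements the classical greedy rule, whose optimality for the fractional knapsack is standard.

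First I would rewrite the objective in incremental form. Since each $g_i(q_i):=\max_{j}\{\alpha_{i,j}q_i+\nu_{i,j}\}$ is convex and piecewise linear with slope $\alpha_{i,0}$ on $[0,a_i]$, slope $\alpha_{i,1}$ on $[a_i,\mu_i]$ and slope $\alpha_{i,2}$ on $[\mu_i,b_i]$, I would decompose each order quantity as $q_i=x_{i,0}+x_{i,1}+x_{i,2}$ with $0\le x_{i,0}\le a_i$, $0\le x_{i,1}\le \mu_i-a_i$, $0\le x_{i,2}\le b_i-\mu_i$. Because $q_i$ is a single scalar, this decomposition automatically obeys the precedence that a later segment can be positive only once all earlier segments are saturated, and under this precedence one has the identity $g_i(q_i)=g_i(0)+\sum_{j}\alpha_{i,j}x_{i,j}$. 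Minimizing $\sum_i g_i(q_i)$ subject to $\sum_i c_iq_i\le B$ is therefore equivalent to maximizing the total cost reduction $\sum_{i,j}(-\alpha_{i,j})x_{i,j}$ over the box constraints, the budget $\sum_{i,j}c_ix_{i,j}\le B$, and the precedence constraints. In this reformulation each segment $(i,j)$ is a knapsack item consuming budget at rate $c_i$ per unit of $x_{i,j}$ and yielding value density $-\alpha_{i,j}/c_i$; segments with $\alpha_{i,j}\ge 0$ have non-positive density and are never worth filling, which is exactly why the ordered set $\mathcal{G}$ in \eqref{orderStrategy} retains only the tuples with $\alpha_{i,j}<0$.

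Next I would drop the precedence constraints and solve the resulting ordinary fractional knapsack by the greedy rule, filling segments in order of decreasing value density $-\alpha_{i,j}/c_i$, i.e.\ increasing $\alpha_{i,j}/c_i$, which is precisely the ordering of $\mathcal{G}$. Optimality of greedy for this relaxed problem follows from the usual exchange argument: given any feasible fill differing from the greedy one, budget placed on a lower-density segment can be shifted to an unsaturated higher-density segment without decreasing the total value, and iterating drives any optimum to the greedy solution. The crucial observation---and the step I expect to be the main obstacle---is that the greedy solution automatically satisfies the precedence constraints I dropped. This is a direct consequence of convexity: $\alpha_{i,0}\le\alpha_{i,1}\le\alpha_{i,2}$ gives $-\alpha_{i,0}/c_i\ge -\alpha_{i,1}/c_i\ge -\alpha_{i,2}/c_i$, so within any single item the densities decrease in $j$ and greedy necessarily saturates segment $0$ before placing anything in segment $1$, and segment $1$ before segment $2$ (consistent with the fact, already noted above \eqref{orderStrategy}, that the pieces of each item appear in $\mathcal{G}$ in increasing order of $j$). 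Hence the greedy optimum of the relaxed problem lies in the smaller, precedence-constrained feasible set of the original problem; since that set is contained in the relaxed one, the greedy fill is optimal for the original problem as well.

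Finally I would match Algorithm~\ref{Algorithm1} to this greedy rule. Step~(i) builds the density-sorted list $\mathcal{G}$; steps~(ii)--(iii) process its elements in order, and setting $q_i$ successively to $a_i$, $\mu_i$, $b_i$ upon encountering $(i,0),(i,1),(i,2)$ is precisely the act of saturating segments $0,1,2$ in turn (legitimate because these tuples appear in $\mathcal{G}$ in increasing order of $j$); step~(iv) performs the single fractional fill of the threshold segment when the next full saturation would exceed $B$, and then terminates. Thus Algorithm~\ref{Algorithm1} returns exactly the greedy fractional-knapsack solution, which by the preceding paragraphs optimally solves \eqref{model:ConstrainedLPMultiMeanMAD}. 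The only routine checks left are that filling beyond $b_i$ is never beneficial (one verifies the cost slope for $q_i>b_i$ equals $c_id_i>0$, so no such segment ever enters $\mathcal{G}$) and that the terminal partial allocation consumes the residual budget exactly; both are immediate.
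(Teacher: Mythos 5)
Your proposal is correct and follows essentially the same route as the paper: both reduce \eqref{model:ConstrainedLPMultiMeanMAD} to a continuous knapsack by decomposing each $q_i$ into the segments $[0,a_i]$, $[a_i,\mu_i]$, $[\mu_i,b_i]$ with per-unit values $\alpha_{i,j}/c_i$, observe that the convexity ordering $\alpha_{i,0}\le\alpha_{i,1}\le\alpha_{i,2}$ makes the greedy fill automatically respect the within-item precedence, and then invoke the standard exchange-argument optimality of greedy for the fractional knapsack. Your treatment is, if anything, slightly more explicit than the paper's about why the relaxed (precedence-free) optimum coincides with the constrained one, but the underlying argument is the same.
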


\begin{proof}{Proof}
To prove that this algorithm produces an optimal solution, we construct a continuous knapsack problem that solves  \eqref{model:ConstrainedLPMultiMeanMAD}.
In the following, $(i_k,j_k)$ corresponds to the $k$th entry of the ordered sequence of items in $\mathcal{G}$. Define the following auxiliary model:
\begin{equation}\label{auxiliaryModelProof}
\begin{aligned}
\min_{\vecc{x}} \quad & \sum_{k=1}^m p_k x_k
\\
\textrm{s.t.} \quad &  \sum_{k=1}^m c_k  x_k  \leq B,
\\ & 0 \leq x_k \leq u_k \quad \forall k=1,\dots,m,
\end{aligned}
\end{equation}
where 
\begin{equation*}
    u_k = \left\{
    \begin{array}{ll}
        a_{i_k}, & \mbox{for } j_k = 0 \\
        \mu_{i_k} - a_{i_k}, & \mbox{for } j_k=1 \\
        b_{i_k} - \mu_{i_k}, & \mbox{for } j_k = 2 \\
    \end{array}
\right.
\end{equation*}
and $p_k = \alpha_{i_k,j_k}$ and $c_k = c_{i_k}$.
From the order of the sequence, it follows that $p_1/c_1 \leq \ldots \leq p_m/c_m$.
Assume that $(x^*_1,\dots,x^*_m)$ is an optimal solution to optimization problem \eqref{auxiliaryModelProof}. For $i \in N$, let $q^U_i = \sum_{k=1,\dots,m| i=i_k }x^*_k$. Since $\alpha_{i,0}\leq\alpha_{i,1}\leq\alpha_{i,2}$, the pieces $j_k$ appear in $\mathcal{G}$ in increasing order for each item $i$. Thus, in an optimal solution, $u_{i_k,j_k}$ will only be attained if its predecessor $u_{i_k,j_l}$ is also attained. By construction, $\vecc{q}^U$ is feasible for \eqref{model:ConstrainedLPMultiMeanMAD}. Moreover, the objective values of problems \eqref{model:ConstrainedLPMultiMeanMAD} and \eqref{auxiliaryModelProof} only differ by a constant term, so both problems have the same optimal solution. 
For the continuous knapsack problem, a greedy allocation produces an optimal solution (see \ref{ec:Knapsack}). Hence, $\vecc{q}^U=(q^U_1,\dots,q^U_n)$ is optimal for \eqref{model:ConstrainedLPMultiMeanMAD}. 
\end{proof}

Theorem 2 shows that there exists a ranking for the selection of items. Take an initial budget $B =0$. If we increase the budget $B$ by some small value, we first increase item $i$ to $a_i$ for the item that has the highest mark-up $m_i$. This makes sense intuitively because the product with the highest mark-up is most profitable and, since $q_i < a_i$, we have no risk of overstocking. We successively select the items with the greatest marginal benefit $\alpha_{i,j}/c_i$, and increase the order quantity consecutively to either $a_i$, $\mu_i$ or $b_i$. 
This procedure continues until we have spent the entire budget, or reached the uncapacitated optimum. Items that are ordered in the beginning of this procedure have the largest impact on the decrease in costs for the multi-item newsvendor model. 

As the main complexity of the knapsack algorithm in Theorem~\ref{thm:AllocationAlgorithm} stems from sorting the set $\mathcal{G}$, the greedy approach is of computational complexity $O(n \log n)$. 
Moreover, the solution can be found in $O(n)$ time by first identifying the critical element $(i_s,j_s)$ that will violate the budget constraint, as proposed by \citet{balas1980algorithm} for the continuous knapsack problem. One then compares each $\alpha_{i,j}/c_i$ with the ratio of the critical element to determine the optimal allocation of budget to the items. 
The optimal solution can also be found through the LP \eqref{model:ConstrainedLPMultiMeanMAD}, which we solve with the simplex method. 
We remark that 
a single iteration of 
the simplex method 
takes $O(n^2)$ arithmetic operations  \citep{illes2002pivot}, which exceeds the time requirement of the knapsack algorithm.

\subsection{A matching lower bound}\label{sec:variancebeta}
The robust analysis so far was based on finding a tight upper bound on the cost function when we know the mean, MAD and range of the demand distributions.   
When additional information is available, we can also construct a matching lower bound. We include the skewness information $\beta_i=\mathbb{P}(D_i \geq \mu_i)$ in the mean-MAD ambiguity set to obtain the tight lower bound. 
For the random variables $\vec{D} = (D_1,\dots,D_n)$, define the ambiguity set as
\begin{equation*}
    \mathcal{P}_{(\mu,\delta,\beta)} := \{\mathbb{P} \, | \, \mathbb{P} \in \mathcal{P}_{(\mu, \delta)} , \, \mathbb{P}(D_i \geq \mu_i) = \beta_i, \, i= 1,\dots,n \}
\end{equation*}
with $\mathcal{P}_{(\mu,\delta,\beta)}\subseteq\mathcal{P}_{(\mu,\delta)}$. The proof of the following result is identical to that of Lemma~\ref{lemma:meanmad}, but now uses the tight lower bound for a convex function of random variables discussed in \citet{ben1972more}.
To make this paper self-contained, a proof for the univariate case is provided in \ref{ec:proofs}. This is sufficient since the univariate result can be applied to each term of the summation in $G(\vec{q},\vec{D})$ separately, as with Lemma~\ref{lemma:meanmad}.

\begin{lemma}\label{lemma:meanmadbeta} 
The extremal distribution that solves
$\min\limits_{\mathbb{P} \in \mathcal{P}_{(\mu,\delta,\beta)}} \expectp{G(\vecc{q},\vec{D})}$
consists for each $D_i$ of a two-point distribution with values $\mu_i + \frac{\delta_i}{2\beta_i},\,  \mu_i - \frac{\delta_i}{2(1-\beta_i)}$ and probabilities
$\beta_i,\,1-\beta_i,$ respectively.
\end{lemma}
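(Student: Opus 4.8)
The plan is to exploit separability to reduce the $n$-dimensional minimization to $n$ independent univariate problems, and then to establish each univariate lower bound by a conditional Jensen argument. Since $G(\vecc{q},\vec{D})$ is a sum of terms each depending on a single coordinate $D_i$, and since $\mathcal{P}_{(\mu,\delta,\beta)}$ constrains each marginal separately, the objective depends on the joint law only through its marginals, so
$$\min_{\mathbb{P}\in\mathcal{P}_{(\mu,\delta,\beta)}}\mathbb{E}_{\mathbb{P}}[G(\vecc{q},\vec{D})] = \sum_{i=1}^n c_i\Big(d_i(q_i-\mu_i) + (m_i+d_i)\min_{\mathbb{P}_i}\mathbb{E}_{\mathbb{P}_i}(D_i-q_i)^+\Big),$$
where each inner minimum ranges over univariate laws with mean $\mu_i$, MAD $\delta_i$, skewness $\beta_i$, and support in $[a_i,b_i]$. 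The linear term contributes only the fixed constant $d_i(q_i-\mu_i)$, so it suffices to minimize $\mathbb{E}(D-q)^+$ over a single such marginal; I suppress the index $i$ from here on.

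Next I would pin down the conditional means from the three moment constraints. From $\mathbb{E}[D-\mu]=0$ and $\mathbb{E}|D-\mu|=\delta$ one obtains $\mathbb{E}(D-\mu)^+ = \mathbb{E}(\mu-D)^+ = \delta/2$, and combining these with $\mathbb{P}(D\geq\mu)=\beta$ gives
$$\mathbb{E}[D\mid D\geq\mu] = \mu + \frac{\delta}{2\beta}, \qquad \mathbb{E}[D\mid D<\mu] = \mu - \frac{\delta}{2(1-\beta)}.$$
Because any feasible $D$ lies in $[\mu,b]$ on the event $\{D\geq\mu\}$ and in $[a,\mu)$ on its complement, these two conditional means automatically lie in $[\mu,b]$ and $[a,\mu]$ respectively, so the candidate two-point distribution is feasible (its support sits inside $[a,b]$) for every admissible parameter choice, with no extra assumption required.

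Finally I would invoke Jensen's inequality on each half of the support. Conditioning on $\{D\geq\mu\}$ and on $\{D<\mu\}$ and applying Jensen to the convex map $D\mapsto(D-q)^+$ within each event shows that replacing the conditional law by a point mass at its conditional mean cannot increase $\mathbb{E}(D-q)^+$. This produces the lower bound attained by the two-point distribution placing mass $\beta$ at $\mu+\delta/(2\beta)$ and mass $1-\beta$ at $\mu-\delta/(2(1-\beta))$. A direct check confirms this law has the prescribed mean, MAD, and skewness, so it is feasible and hence optimal. Crucially, the only property of the integrand used is convexity in $D$, which holds for $(D-q)^+$ for every $q$; therefore the minimizer is independent of $q$, and summing over $i$ recovers the stated multivariate extremal distribution.

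The step I expect to be most delicate is the conditional Jensen argument: one must ensure the conditioning is well defined (i.e.\ $0<\beta<1$) and apply the inequality separately on the two events, so that the resulting point masses respect both the mean-split value $\delta/2$ and the skewness weight $\beta$ \emph{simultaneously}. The accompanying subtlety, easily overlooked, is verifying that the two conditional means fall inside $[a,b]$, which is exactly what guarantees feasibility of the extremal distribution.
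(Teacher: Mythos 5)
Your proof is correct, but it follows a genuinely different route from the paper's. The paper establishes the lower bound via semi-infinite programming duality: it forms the dual moment problem with four multipliers, constructs a piecewise-linear minorant $M(x)=\lambda_0+\lambda_1 x+\lambda_2|x-\mu|+\lambda_3\mathbbm{1}_{\{x\geq\mu\}}$ with a kink and a jump at $\mu$, uses the KKT conditions to locate the two tangency points $\mu+\frac{\delta}{2\beta}$ and $\mu-\frac{\delta}{2(1-\beta)}$, and then verifies dual feasibility by matching $\lambda_1\pm\lambda_2$ to the slopes of $f$ at those points. You instead argue primally: the constraints $\mathbb{E}[D]=\mu$, $\mathbb{E}|D-\mu|=\delta$, $\mathbb{P}(D\geq\mu)=\beta$ force $\mathbb{E}[(D-\mu)^+]=\mathbb{E}[(\mu-D)^+]=\delta/2$ and hence pin down the two conditional means, and conditional Jensen on the events $\{D\geq\mu\}$ and $\{D<\mu\}$ shows that collapsing each conditional law to its mean can only decrease the expectation of any convex integrand, with the resulting two-point law feasible because the conditional means of any feasible distribution necessarily lie in $[a,\mu]$ and $[\mu,b]$. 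Both arguments are sound and both make clear that the extremal distribution is independent of $q$ (only convexity of $(\cdot-q)^+$ is used); your Jensen route is more elementary and makes the identity of the support points transparent, while the paper's duality route produces an explicit dual certificate and a template that extends mechanically to other moment constraints. Your separability reduction matches the paper's (the multivariate claim follows by applying the univariate result termwise), and your flagged caveats ($0<\beta<1$ for the conditioning, feasibility of the support points) are exactly the right ones; the degenerate cases $\beta\in\{0,1\}$ force $\delta=0$ and are trivial.
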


Using this result, we obtain
\begin{equation}\label{multiLowerBound}
\begin{aligned}
    \min_{\vecc{q}}& \quad C^L(\vecc{q}) :=\sum_{i=1}^n c_i\left(d_i(q_i-\mu_i) + (m_i+d_i)\left(\beta_i(\mu_i + \dfrac{\delta_i}{2\beta_i}-q_i)^+ + (1- \beta_i)(\mu_i-\dfrac{\delta_i}{2(1-\beta_i)}-q_i)^+  \right) \right)       \\
    \text{s.t }& \quad \sum_{i=1}^n c_i q_i \leq B, \\
    & \quad q_i \geq0, \quad \text{for } i = 1,\dots,n,  
\end{aligned}
\end{equation}
as a model to provide a lower bound for the multi-item newsvendor. As the objective function in problem \eqref{multiLowerBound} also consists of piecewise linear functions, there exists an LP representation and knapsack algorithm for \eqref{multiLowerBound} similar to the results for problem \eqref{model:ConstrainedMultiMeanMAD}. 

We can now solve \eqref{model:ConstrainedLPMultiMeanMAD} and \eqref{multiLowerBound} to obtain tight performance intervals for the multi-item newsvendor model, using recent DRO results (see \ref{ec:recentDRO} and \citealp{postek2018robust}). For all feasible ordering policies $\vecc{q}$ and $\mathbb{P}\in\mathcal{P}_{(\mu,\delta,\beta)}$, it holds that
\begin{equation*}
    C(\vecc{q}) \in \left[C^L(\vecc{q}), C^U(\vecc{q})\right].
\end{equation*}
In addition, for the optimal solutions to the newsvendor problem and its distributionally robust counterparts, 
\begin{equation*}
    C(\vecc{q}^*) \in \left[C^L(\vecc{q}^L), C^U(\vecc{q}^U)\right].
\end{equation*} 
One can find the tightest upper and lower bounds, based on mean-MAD ambiguity, for the multi-item newsvendor model by calculating the optimal solutions to models \eqref{model:ConstrainedMultiMeanMAD} and \eqref{multiLowerBound}, respectively.

\section{Numerical examples of robust ordering}\label{sec:numm}
We will now illustrate and visualize the robust ordering policies.
To demonstrate the  `budget-consistency' property, Section~\ref{subsecinc} applies the knapsack algorithm for a setting where the budget is increased. In Section~\ref{sec:performance} we
contrast the performance of the knapsack policy for partial demand information against that of the optimal solution for the full information setting.
Our code is made available in the form of an online supplement.

\subsection{Numerical illustration of the `budget-consistency' property}\label{subsecinc}

We illustrate the knapsack algorithm and the process of allocating budget to different order quantities for items in the newsvendor model. Consider $n=5$ identically distributed items with support $a=10,\ b=50$ and mean $\mu = 30$. From Figure~\ref{allocAlgorithmEx}, we can infer that item 1 is the most profitable. Low budget levels are allocated to this item such that we obtain $q_1 = \mu$. Item number 3 is the last item to which the budget is allocated. Hence, it is the least profitable item. Table~\ref{exampleG} displays the ordered set $\mathcal{G}$.
 From this table, we can indeed infer that item 1 has the smallest value for $\alpha_{i,0}/c_{i}$ and therefore is increased first. 
\begin{figure}[h!]
    \centering
    \includegraphics[width=0.6\textwidth]{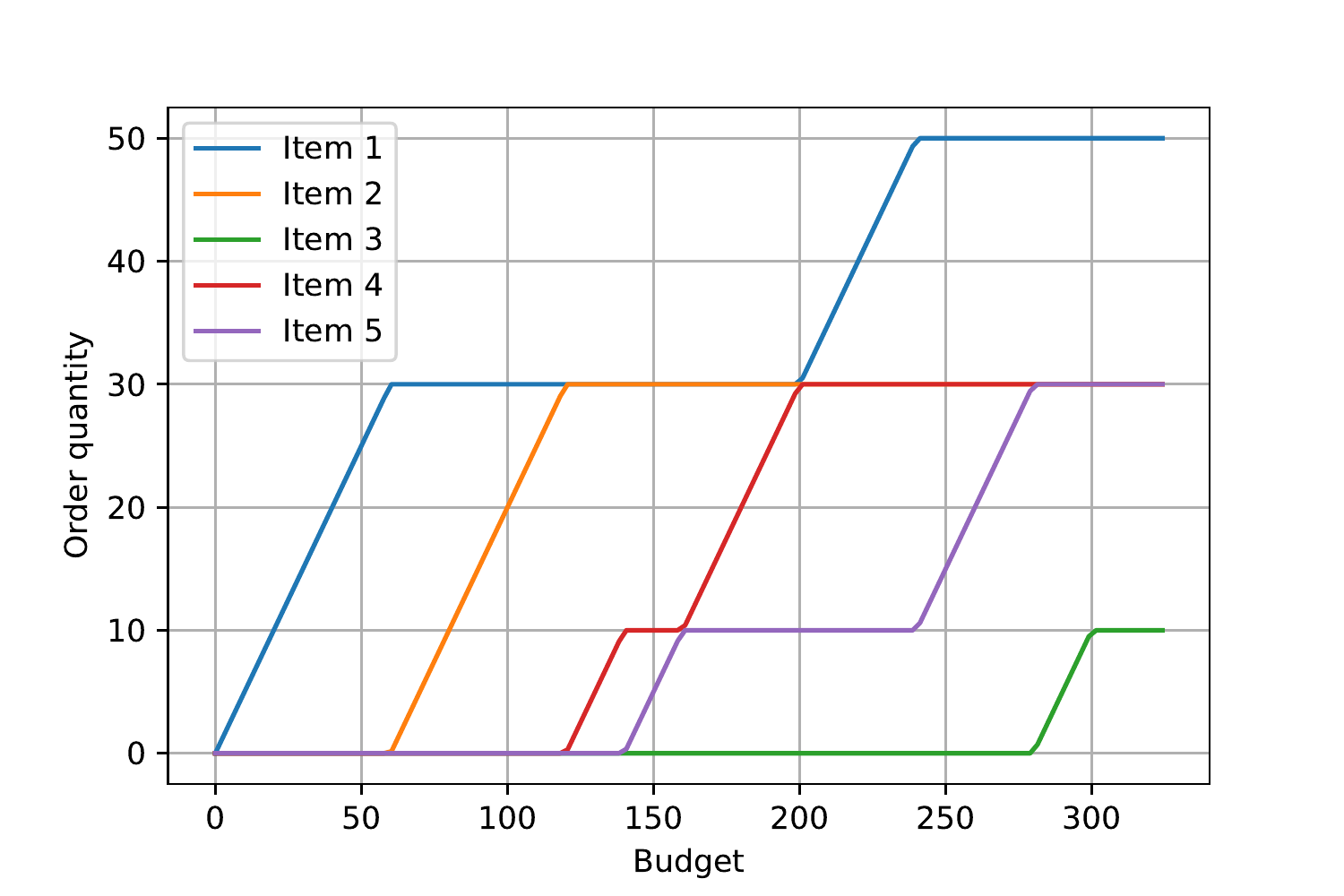}
    \caption{Development of the order quantities when the  budget increases according to the knapsack algorithm}
    \label{allocAlgorithmEx}
\end{figure}

\begin{table}[h!]
\centering
\caption{Table containing $\alpha_{i,j}/c_{i}$ and corresponding information of the ordered set $\mathcal{G}$}
\label{exampleG}
\scalebox{0.9}{
\begin{tabular}{c|ccccccccccccccc}
$\mathcal{G}$              & 1     & 2     & 3     & 4     & 5    & 6     & 7    & 8     & 9     & 10 & 11   & 12   & 13   & 14  & 15  \\ \hline
$\alpha_{i,j}/c_{i}$              & -0.92 & -0.75 & -0.72 & -0.49 & -0.3 & -0.15 & -0.1 & -0.08 & -0.03 & -0.01  & 0.14 & 0.42 & 0.45 & 0.7 & 0.7 \\
Function piece & 0     & 1     & 0     & 1     & 0    & 0     & 1    & 2     & 1     & 0  & 1    & 2    & 2    & 2   & 2   \\
Item           & 1     & 1     & 2     & 2     & 4    & 5     & 4    & 1     & 5     & 3  & 3    & 5    & 2    & 4   & 3  
\end{tabular}}
\end{table}

Figure~\ref{allocAlgorithmEx} nicely illustrates that when the budget is increased, the orders for the original budget remain
unaltered, while only the additional budget is further divided over the items. 
To further illustrate the `budget-consistency' property, consider the multi-item newsvendor model for which $n=2$, $m_2=2$, the remaining cost parameters equal 1, and demand is identically distributed according to a symmetric triangle distribution supported on $[10,50]$. In Figure~\ref{meanVarianceIllustration} we plot the expected costs and order quantities for various budget levels. Figure~\ref{meanVarianceIllustrationPolicy} contains the allocation between both order quantities. 
For low budget values, one first increases the order quantity of item one, the most profitable item. Figure~\ref{meanVarianceIllustrationBounds} shows the upper bound \eqref{model:ConstrainedMultiMeanMAD} and lower bound \eqref{multiLowerBound} 
that together lead to a tight performance interval for the expected costs.

For the sake of comparison, we also show results for the partial demand information setting considered in \citet{gallego1993distribution}, assuming that the mean and variance of demands are known; see  
\ref{sec:scarfgallegomoon} for more details. The results of  \citet{gallego1993distribution} depend (non-trivially) on all model parameters, including the budget $B$. This lack of budget-consistency forces the decision maker to solve an optimization problem, see \eqref{model:multiItemMeanVariance}, for each budget level separately, and explains the smooth curve in Figure~\ref{meanVarianceIllustrationPolicy}. In contrast, our knapsack algorithm generates a sorted ordering list that does not depend on $B$, and prescribes to sort items successively according to that list, with order sizes equal to the minimal, mean or maximum demand.

\begin{figure}[h!]
    \centering
    \begin{subfigure}{0.49\linewidth}
    \includegraphics[width=\textwidth]{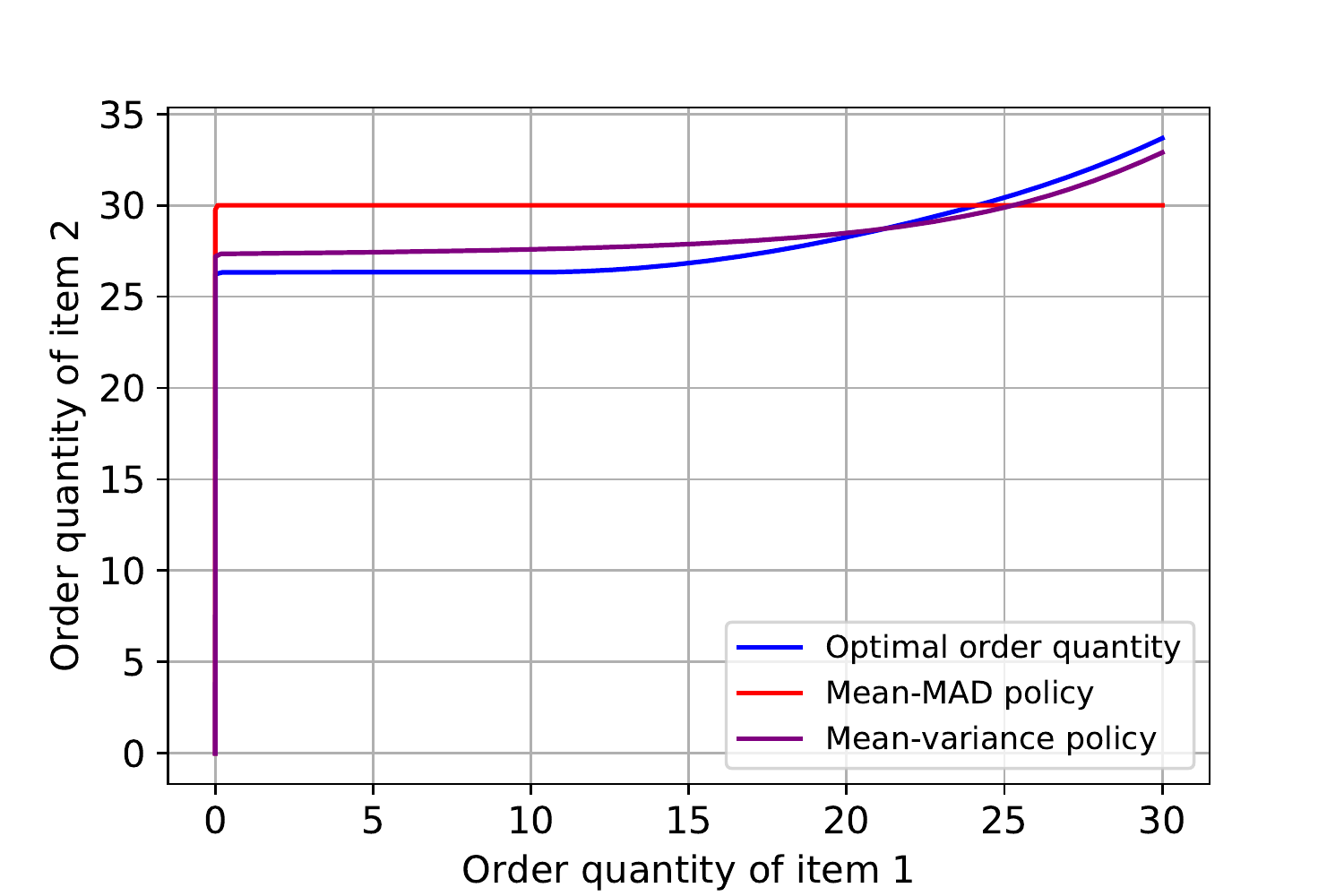}
    \caption{Ordering policy}
    \label{meanVarianceIllustrationPolicy}
    \end{subfigure}
    \hfill
    \begin{subfigure}{0.49\linewidth}
        \includegraphics[width=\textwidth]{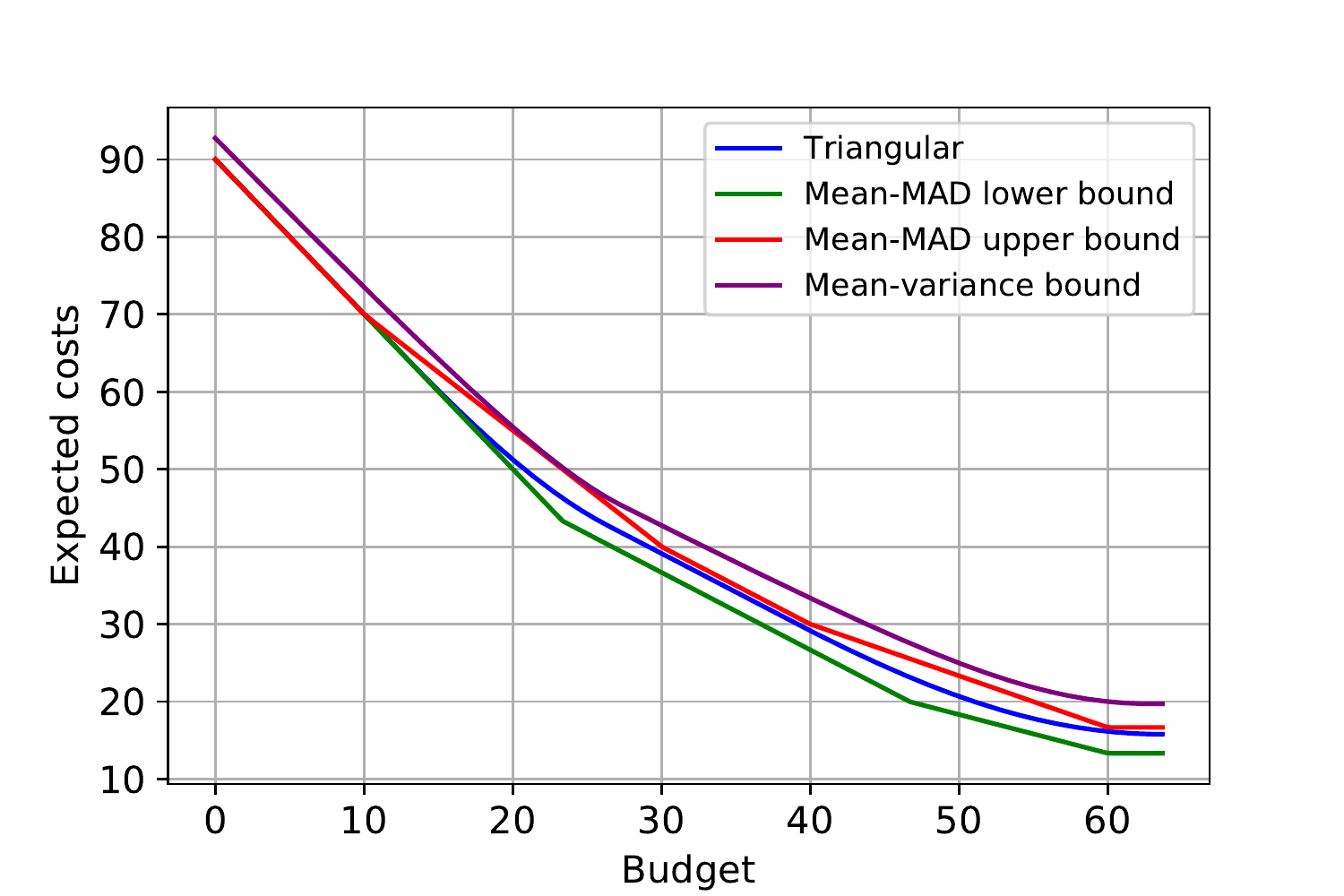}
    \caption{Newsvendor costs}
    \label{meanVarianceIllustrationBounds}
    \end{subfigure}
    \caption{Mean-variance and mean-MAD bounds and ordering policies for the newsvendor model. The mean-variance curves are obtained through solving \eqref{model:multiItemMeanVariance}. The mean-MAD policy corresponds to the optimal solution of \eqref{model:ConstrainedMultiMeanMAD}. The mean-MAD upper and lower bounds correspond to the extremal three- and two-point distributions, respectively. The ‘true’ cost function assumes that $D$ follows a symmetric triangular distribution on $[10,50]$.}\label{meanVarianceIllustration}
\end{figure}

We emphasize that these results are not meant to numerically compare the mean-MAD and mean-variance policies, because the displayed differences merely express different ways of dealing with ambiguity.  Indeed, it is hard to compare both policies as the respective ambiguity sets can contain vastly different distributions. For instance, a finite variance excludes distributions with an infinite second moment, while finite MAD does not. 
For our purposes, MAD and variance are equally adequate descriptors of dispersion, and both are easily calibrated on data using basic statistical estimators. The crucial difference in the DRO context of this paper is that MAD leads to a simple, budget-consistent ordering policy.


\subsection{Expected value of additional information}\label{sec:performance}
We introduce as performance measure the expected value of additional information (EVAI), defined as
\begin{equation*}
    \text{EVAI}(\vecc{q}^U_B) = \dfrac{C(\vecc{q}^U_B)-C(\vecc{q}^*_B)}{C(\vecc{q}^*_B)}, 
    \label{evaiMultiItemPerformance}
\end{equation*} 
where $\vecc{q}^U_B$ is the robust ordering policy and $\vecc{q}^*_B$ is the optimal ordering policy when the joint demand distribution is known. 
We let $B$ run from $0$ to $\sum_{i=1}^n q^*_i \eqqcolon B_{\text{opt}}$, and consider nine different demand distributions, listed in Table~\ref{tableDistributionsMultiItemPerformance}.

\begin{table}[h!]
\caption{Nine distributions used for multi-item performance analysis }
    \label{tableDistributionsMultiItemPerformance}
\centering
\begin{tabular}{cccccc}
\textbf{Case} & \ & \textbf{Case} & & \textbf{Case} &  \\ \hline
1             & $\text{Uniform}[10,50]$     & 4             & $\text{Beta}(1,3)$ on $[0,50]$ & 7             & $\text{Triangular}(10,50,18)$              \\ 
2             & $\text{Uniform}[10,100]$    & 5             & $\text{Beta}(2,2)$ on $[0,50]$ & 8             & $\text{Triangular}(10,50,30)$              \\
3             & $\text{Uniform}[10,200]$   & 6             & $\text{Beta}(3,1)$ on $[0,50]$ & 9             & $\text{Triangular}(10,50,42)$            \\ \hline
\end{tabular}
\end{table}


We consider $n=25$ items. For each item $i$, let $c_i=d_i=1$ and assume identically distributed demand. For example, in Case 2 the demand $D_i$ for each item $i$ follows the uniform distribution with parameters $a_i=10$ and $b_i=100$.
Table \ref{tab4a} provides an overview for the mark-up, representing low, average and high margins. 

\begin{table}[h!]
\centering
\caption{Mark-up values for all 25 items in the newsvendor model}
\label{tab4a}
\begin{tabular}{c|ccccccccccccc}
\hline
Mark-up        & \multicolumn{1}{l}{$m_1$} & \multicolumn{1}{l}{$m_2$} & \multicolumn{1}{l}{$m_3$} & \multicolumn{1}{l}{$m_4$} & \multicolumn{1}{l}{$m_5$} & \multicolumn{1}{l}{$m_6$} & \multicolumn{1}{l}{$m_7$} & \multicolumn{1}{l}{$m_8$} & \multicolumn{1}{l}{$m_9$} & \multicolumn{1}{l}{$m_{10}$} & \multicolumn{1}{l}{$m_{11}$} & \multicolumn{1}{l}{$m_{12}$} & \multicolumn{1}{l}{$m_{13}$}  \\ \hline
Low margin      & 0.1                       & 0.14                      & 0.18                      & 0.21                      & 0.25                      & 0.29                      & 0.33                      & 0.36                      & 0.4                       & 0.44         & 0.48     & 0.51     & 0.55               \\
Average margin  & 1                         & 1.13                      & 1.25                      & 1.38                      & 1.5                       & 1.63                      & 1.75                      & 1.88                      & 2                         & 2.13          & 2.25     & 2.38     & 2.5             \\
High margin    & 4                         & 4.21                      & 4.42                      & 4.63                      & 4.83                      & 5.04                      & 5.25                      & 5.46                      & 5.67                      & 5.88      & 6.08     & 6.29     & 6.5        \\ \hline         

Mark-up                 & $m_{14}$ & $m_{15}$ & $m_{16}$ & $m_{17}$ & $m_{18}$ & $m_{19}$ & $m_{20}$ & $m_{21}$ & $m_{22}$ & $m_{23}$ & $m_{24}$ & $m_{25}$ \\ \hline
Low margin      & 0.59     & 0.63     & 0.66     & 0.7      & 0.74     & 0.78     & 0.81     & 0.85     & 0.89     & 0.93     & 0.96     & 1        \\
Average margin  & 2.63     & 2.75     & 2.88     & 3        & 3.13     & 3.25     & 3.38     & 3.5      & 3.63     & 3.75     & 3.88     & 4        \\
High margin     & 6.71     & 6.92     & 7.12     & 7.33     & 7.54     & 7.75     & 7.96     & 8.17     & 8.37     & 8.58     & 8.79     & 9 \\\hline      
\end{tabular}
\end{table}

For the low margin regime, Figure~\ref{performanceMADlow} shows results for each of the nine cases, for both the robust ordering policy with mean-MAD-range information, and for the policy that uses the additional information $\beta_i=\P(D_i\geq\mu_i)$.
For the former, the worst performance over all nine cases has a maximum deviation of approximately 23\% compared to the optimal order quantity $q^*_B$. Overall, the performance of the robust policy only deviates a few percent from the optimal performance with full information availability. For the uniformly distributed cases (Cases~1-3), the performance decreases when the range increases. For beta distributed demand (Cases~4-6), right-tailed distributions perform worse than left-tailed distributions. This effect is also observed for the triangular distributions (Cases~7-9). The  policy with additional information $\beta_i=\P(D_i\geq\mu_i)$ performs somewhat better in most cases.

Figure \ref{performanceMADhigh} shows similar results for  high margins. 
The EVAI for the robust policy remains mostly below 10\% for lower budget levels, but starts increasing rapidly when the budget approaches $B_{\text{opt}}$ (i.e., when approaching the unconstrained model). When the budget is less restrictive, additional distributional information provides substantial value. In particular, since the  policy uses skewness information $\beta_i$, it performs better (in expectation) for higher budget levels than the robust ordering policy. We present some more performance plots for the average margin setting and additional numerical experiments with mean-variance information in \ref{ec:moreresults}.

\begin{figure}[h!]
    \centering
    \includegraphics[width=.85\linewidth]{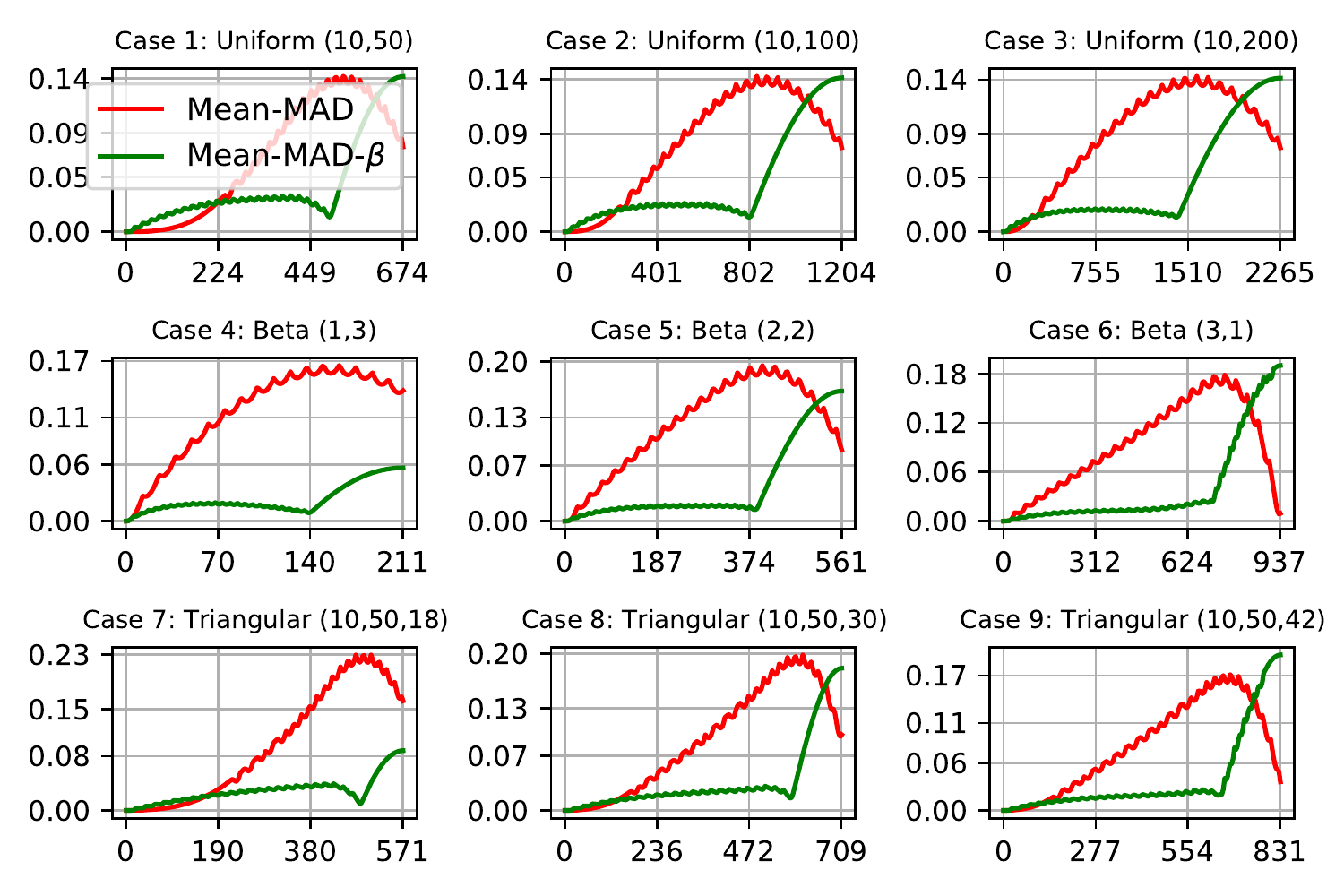}
    \caption{The results for the low margin setting. The x-axis corresponds to $B$ and the y-axis to the EVAI.}
    \label{performanceMADlow}
\end{figure}

\begin{figure}[h!]
    \centering
    \includegraphics[width=.85\linewidth]{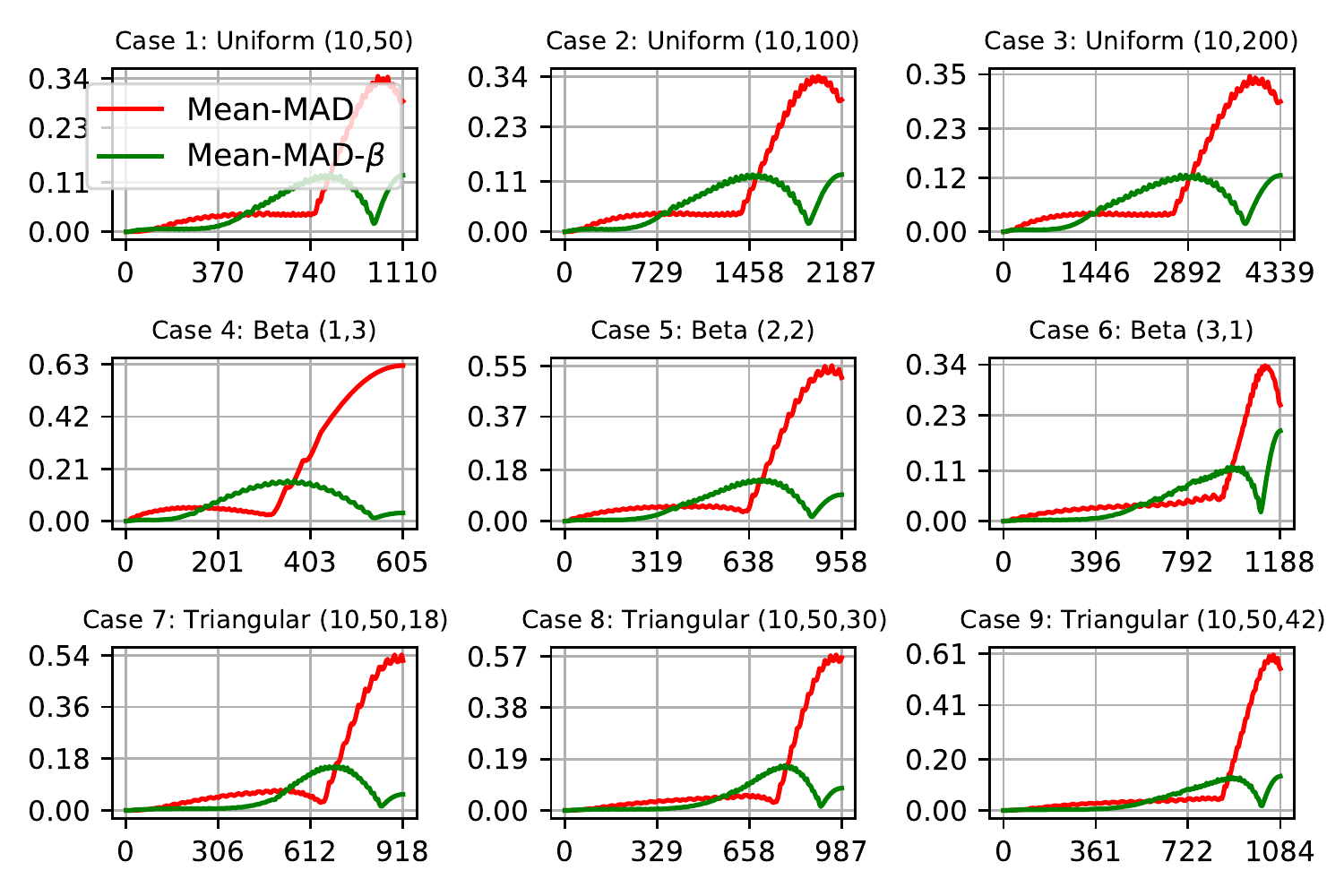}
    \caption{The results for the high margin setting. The x-axis corresponds to $B$ and the y-axis to the EVAI.}
    \label{performanceMADhigh}
\end{figure}

We next quantify the value of MAD information by comparing the performance with the situations when only the mean and range of demand is known. For the low margin setting, Figure~\ref{performanceEM} shows the EVAI for the ordering policy with only mean-range information. Like the mean-MAD policy, this policy follows from a discrete distribution, in this case the extremal distribution on $\{a,b\}$ with probabilities $\frac{b-\mu}{b-a}$ and $\frac{\mu-a}{b-a}$ that attains the Edmundson-Madansky bound (see \citealp{ben1972more}). That is, instead of the worst-case three-point distribution, we take the expectation in \eqref{model:CapNewsModel} over this two-point distribution and find the robust mean-range ordering policy using the resulting LP. The plots clearly demonstrate that knowledge on dispersion in terms of MAD improves performance considerably.

\begin{figure}[h]
    \centering
    \includegraphics[width=.85\linewidth]{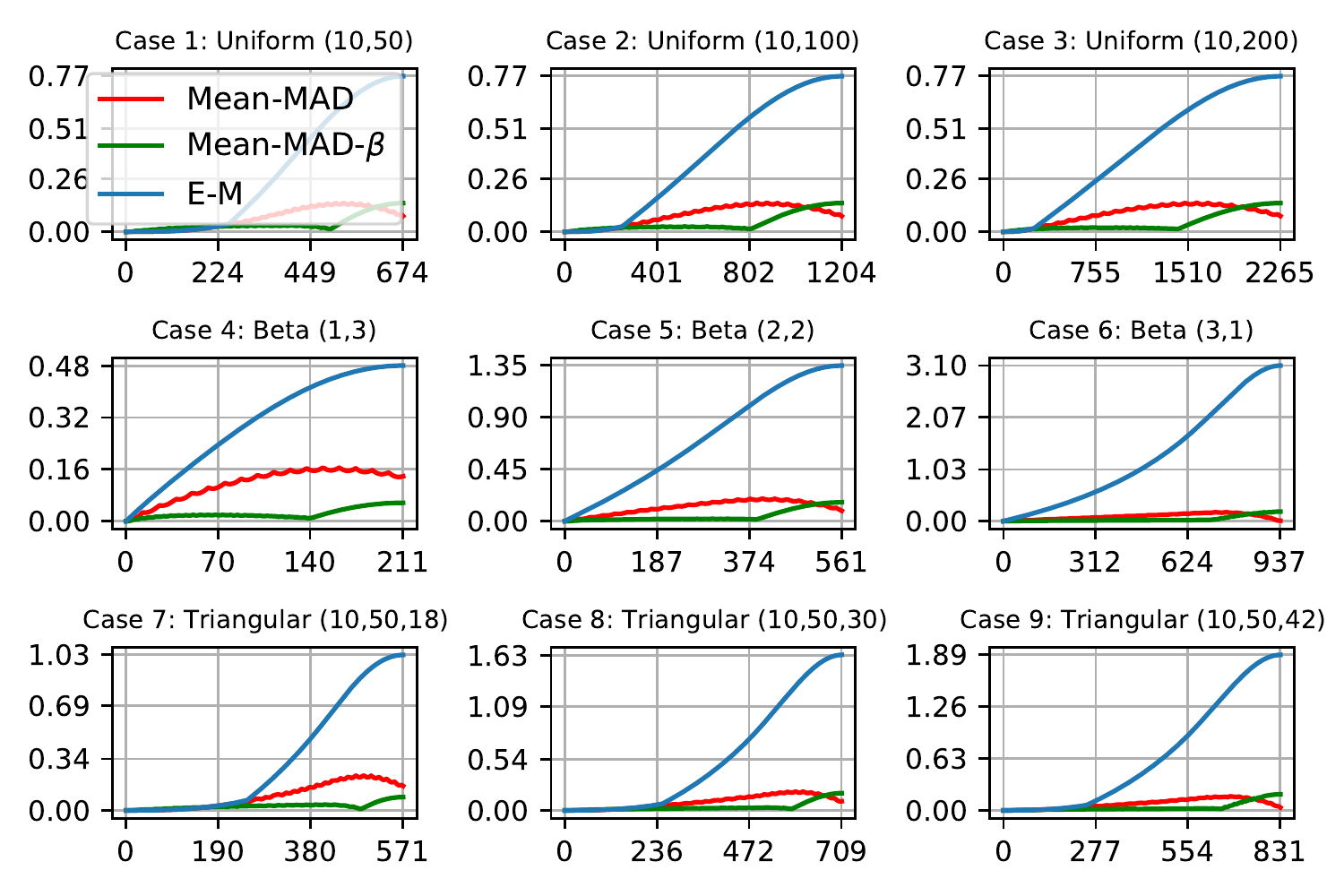}
    \caption{The results for the low margin setting. The x-axis corresponds to $B$ and the y-axis to the EVAI. The E-M performance plot refers to the model with only mean information.}
    \label{performanceEM}
\end{figure}


\newpage

\section{Conclusions}\label{sec:outlook}
This paper establishes new ordering policies for the newsvendor with partial demand information (mean, MAD and range) with a budget constraint. 
The ordering policies follow from a minimax approach, where we search for the order quantities with minimal costs for the maximal (worst-case) cost function 
restricted to demand distributions that comply with the partial information. 

The minimax analysis for the multi-item setting gives rise to a knapsack problem, and the solution of this knapsack problem in fact is the ordering policy. This policy prescribes to sort items based on their marginal effect on the total costs, reminiscent of the greedy algorithm that solves the continuous knapsack problem. The ordering policy only orders the minimum, mean or maximum demand for each item. Hence, the decision maker can rank the items based on their marginal effects, and then start ordering items according to this list until the budget is spent. The fact that the ranking list is easy to generate, and that the `order of ordering' does not depend on the budget, makes the policy transparent and easy to implement. Existing approaches for full and partial (such as mean-variance) knowledge of the demand distribution lack this property of `budget-consistency'.


The minimax approach provides robustness, with an ordering policy that protects against all distributions that comply with the partial information. 
This approach avoids the need to estimate the demand distribution, which can be a daunting process in practice and is prone to errors.
However, the minimax approach comes at the risk of being overly conservative. 
Through extensive numerical experiments we compared the robust policies for partial demand settings with the policies for full demand settings, and observed that the proposed policies perform well.

At the heart of our analysis lies the idea to set up the robust minimax analysis with MAD information. With MAD as dispersion measure we obtained a tractable optimization model, with a solution in terms of a robust ordering policy that satisfies the budget-consistency property. 
Using MAD to formulate solvable minimax problems can also be applied to other inventory models. We demonstrate this idea in \ref{sec:4} for three extended settings: the newsvendor with multiple contraints, the newsvendor with unreliable supply, and the risk-averse newsvendor. In all three cases, the minimax analysis leads to a tractable mathematical program, either a knapsack problem or a linear program.

\bibliography{arXiv_v1}

\begin{thebibliography}{}

\bibitem[Abdel-Malek et~al., 2004]{abdel2004exact}
Abdel-Malek, L., Montanari, R., and Morales, L.~C. (2004).
\newblock Exact, approximate, and generic iterative models for the
  multi-product newsboy problem with budget constraint.
\newblock {\em International Journal of Production Economics}, 91(2):189--198.

\bibitem[Ardestani-Jaafari and Delage, 2016]{ardestani2016robust}
Ardestani-Jaafari, A. and Delage, E. (2016).
\newblock Robust optimization of sums of piecewise linear functions with
  application to inventory problems.
\newblock {\em Operations Research}, 64(2):474--494.

\bibitem[Arrow et~al., 1951]{arrow1951optimal}
Arrow, K.~J., Harris, T., and Marschak, J. (1951).
\newblock Optimal inventory policy.
\newblock {\em Econometrica: Journal of the Econometric Society},
  19(3):250--272.

\bibitem[Balas and Zemel, 1980]{balas1980algorithm}
Balas, E. and Zemel, E. (1980).
\newblock An algorithm for large zero-one knapsack problems.
\newblock {\em Operations Research}, 28(5):1130--1154.

\bibitem[Ben-Tal et~al., 2013]{ben2013robust}
Ben-Tal, A., Den~Hertog, D., De~Waegenaere, A., Melenberg, B., and Rennen, G.
  (2013).
\newblock Robust solutions of optimization problems affected by uncertain
  probabilities.
\newblock {\em Management Science}, 59(2):341--357.

\bibitem[Ben-Tal and Hochman, 1972]{ben1972more}
Ben-Tal, A. and Hochman, E. (1972).
\newblock More bounds on the expectation of a convex function of a random
  variable.
\newblock {\em Journal of Applied Probability}, 9(4):803--812.

\bibitem[Ben-Tal and Hochman, 1985]{ben1985approximation}
Ben-Tal, A. and Hochman, E. (1985).
\newblock Approximation of expected returns and optimal decisions under
  uncertainty using mean and mean absolute deviation.
\newblock {\em Zeitschrift f{\"u}r Operations Research}, 29(7):285--300.

\bibitem[Boyd and Vandenberghe, 2004]{boyd2004convex}
Boyd, S. and Vandenberghe, L. (2004).
\newblock {\em Convex Optimization}.
\newblock Cambridge University Press, Cambridge, UK.

\bibitem[Chen et~al., 2022]{chen2022distribution}
Chen, H., Hu, M., and Perakis, G. (2022).
\newblock Distribution-free pricing.
\newblock {\em Manufacturing \& Service Operations Management}.
\newblock ePub ahead of print January 20,
  \url{https://doi.org/10.1287/msom.2021.1055}.

\bibitem[Chen et~al., 2010]{chen2010cvar}
Chen, W., Sim, M., Sun, J., and Teo, C.-P. (2010).
\newblock From {CVaR} to uncertainty set: Implications in joint
  chance-constrained optimization.
\newblock {\em Operations Research}, 58(2):470--485.

\bibitem[Dada et~al., 2007]{dada2007newsvendor}
Dada, M., Petruzzi, N.~C., and Schwarz, L.~B. (2007).
\newblock A newsvendor’s procurement problem when suppliers are unreliable.
\newblock {\em Manufacturing \& Service Operations Management}, 9(1):9--32.

\bibitem[Delage and Ye, 2010]{Delage2010}
Delage, E. and Ye, Y. (2010).
\newblock Distributionally robust optimization under moment uncertainty with
  application to data-driven problems.
\newblock {\em Operations Research}, 58(3):595--612.

\bibitem[Elmachtoub et~al., 2021]{elmachtoub2021value}
Elmachtoub, A.~N., Gupta, V., and Hamilton, M.~L. (2021).
\newblock The value of personalized pricing.
\newblock {\em Management Science}, 67(10):6055--6070.

\bibitem[Erlebacher, 2000]{erlebacher2000optimal}
Erlebacher, S.~J. (2000).
\newblock Optimal and heuristic solutions for the multi-item newsvendor problem
  with a single capacity constraint.
\newblock {\em Production and Operations Management}, 9(3):303--318.

\bibitem[Fisher and Raman, 1996]{fisher1996reducing}
Fisher, M. and Raman, A. (1996).
\newblock Reducing the cost of demand uncertainty through accurate response to
  early sales.
\newblock {\em Operations Research}, 44(1):87--99.

\bibitem[Gallego, 1992]{gallego1992minmax}
Gallego, G. (1992).
\newblock A minmax distribution free procedure for the {$(Q, R)$} inventory
  model.
\newblock {\em Operations Research Letters}, 11(1):55--60.

\bibitem[Gallego and Moon, 1993]{gallego1993distribution}
Gallego, G. and Moon, I. (1993).
\newblock The distribution free newsboy problem: review and extensions.
\newblock {\em Journal of the Operational Research Society}, 44(8):825--834.

\bibitem[Hadley and Whitin, 1963]{Hadley1963}
Hadley, G. and Whitin, T.~M. (1963).
\newblock {\em {Analysis of Inventory Systems}}.
\newblock Prentice-Hall, Englewood Cliffs, NJ.

\bibitem[Hanasusanto et~al., 2015]{hanasusanto2015distributionally}
Hanasusanto, G.~A., Kuhn, D., Wallace, S.~W., and Zymler, S. (2015).
\newblock Distributionally robust multi-item newsvendor problems with
  multimodal demand distributions.
\newblock {\em Mathematical Programming}, 152(1):1--32.

\bibitem[Ill{\'e}s and Terlaky, 2002]{illes2002pivot}
Ill{\'e}s, T. and Terlaky, T. (2002).
\newblock Pivot versus interior point methods: Pros and cons.
\newblock {\em European Journal of Operational Research}, 140(2):170--190.

\bibitem[K{\"a}ki et~al., 2015]{kaki2015newsvendor}
K{\"a}ki, A., Liesi{\"o}, J., Salo, A., and Talluri, S. (2015).
\newblock Newsvendor decisions under supply uncertainty.
\newblock {\em International Journal of Production Research}, 53(5):1544--1560.

\bibitem[Kellerer et~al., 2004]{kellerer2004multiple}
Kellerer, H., Pferschy, U., and Pisinger, D. (2004).
\newblock {\em {Knapsack Problems}}.
\newblock Springer-Verlag, Berlin.

\bibitem[Kleer and van Leeuwaarden, 2022]{kleerleeuwaarden2022}
Kleer, P. and van Leeuwaarden, J. (2022).
\newblock Optimal stopping theory for a distributionally robust seller.

\bibitem[Kong et~al., 2013]{kong2013scheduling}
Kong, Q., Lee, C.-Y., Teo, C.-P., and Zheng, Z. (2013).
\newblock Scheduling arrivals to a stochastic service delivery system using
  copositive cones.
\newblock {\em Operations {R}esearch}, 61(3):711--726.

\bibitem[Lau and Lau, 1996]{lau1996newsstand}
Lau, H.-S. and Lau, A. H.-L. (1996).
\newblock The newsstand problem: A capacitated multiple-product single-period
  inventory problem.
\newblock {\em European Journal of Operational Research}, 94(1):29--42.

\bibitem[Mak et~al., 2014]{mak2014appointment}
Mak, H.-Y., Rong, Y., and Zhang, J. (2014).
\newblock Appointment scheduling with limited distributional information.
\newblock {\em Management Science}, 61(2):316--334.

\bibitem[Merzifonluoglu and Feng, 2014]{merzifonluoglu2014newsvendor}
Merzifonluoglu, Y. and Feng, Y. (2014).
\newblock Newsvendor problem with multiple unreliable suppliers.
\newblock {\em International Journal of Production Research}, 52(1):221--242.

\bibitem[Nahmias, 1982]{nahmias1982perishable}
Nahmias, S. (1982).
\newblock Perishable inventory theory: A review.
\newblock {\em Operations Research}, 30(4):680--708.

\bibitem[Nahmias, 2009]{nahmias2009production}
Nahmias, S. (2009).
\newblock {\em Production and Operations Analysis}.
\newblock McGraw-hill Education, New York, 6th edition.

\bibitem[Nahmias and Schmidt, 1984]{nahmias1984efficient}
Nahmias, S. and Schmidt, C.~P. (1984).
\newblock An efficient heuristic for the multi-item newsboy problem with a
  single constraint.
\newblock {\em Naval Research Logistics Quarterly}, 31(3):463--474.

\bibitem[Natarajan et~al., 2018]{natarajan2018asymmetry}
Natarajan, K., Sim, M., and Uichanco, J. (2018).
\newblock Asymmetry and ambiguity in newsvendor models.
\newblock {\em Management Science}, 64(7):3146--3167.

\bibitem[Natarajan and Teo, 2017]{natarajan2017reduced}
Natarajan, K. and Teo, C.-P. (2017).
\newblock On reduced semidefinite programs for second order moment bounds with
  applications.
\newblock {\em Mathematical Programming}, 161(1):487--518.

\bibitem[Nemirovski and Shapiro, 2007]{nemirovski2007convex}
Nemirovski, A. and Shapiro, A. (2007).
\newblock Convex approximations of chance constrained programs.
\newblock {\em SIAM Journal on Optimization}, 17(4):969--996.

\bibitem[Perakis and Roels, 2008]{perakis2008regret}
Perakis, G. and Roels, G. (2008).
\newblock Regret in the newsvendor model with partial information.
\newblock {\em Operations research}, 56(1):188--203.

\bibitem[Perakis et~al., 2020]{perakis2020leveraging}
Perakis, G., Singhvi, D., and Spantidakis, Y. (2020).
\newblock Leveraging the newsvendor for inventory distribution at a large
  fashion e-retailer with depth and capacity constraints.
\newblock Preprint available at SSRN 3632459.

\bibitem[Popescu, 2007]{popescu2007robust}
Popescu, I. (2007).
\newblock Robust mean-covariance solutions for stochastic optimization.
\newblock {\em Operations Research}, 55(1):98--112.

\bibitem[Postek et~al., 2018]{postek2018robust}
Postek, K., Ben-Tal, A., den Hertog, D., and Melenberg, B. (2018).
\newblock Robust optimization with ambiguous stochastic constraints under mean
  and dispersion information.
\newblock {\em Operations {R}esearch}, 66(3):814--833.

\bibitem[Rahimian and Mehrotra, 2019]{rahimian2019distributionally}
Rahimian, H. and Mehrotra, S. (2019).
\newblock Distributionally robust optimization: A review.
\newblock {\em arXiv preprint arXiv:1908.05659\!}

\bibitem[Rockafellar and Uryasev, 2000]{rockafellar2000optimization}
Rockafellar, R.~T. and Uryasev, S. (2000).
\newblock Optimization of conditional value-at-risk.
\newblock {\em Journal of Risk}, 2:21--42.

\bibitem[Rogosinski, 1958]{rogosinski1958moments}
Rogosinski, W.~W. (1958).
\newblock Moments of non-negative mass.
\newblock {\em Proceedings of the Royal Society of London. Series A.
  Mathematical and Physical Sciences}, 245(1240):1--27.

\bibitem[Roos and den Hertog, 2020]{roos2020reducing}
Roos, E. and den Hertog, D. (2020).
\newblock Reducing conservatism in robust optimization.
\newblock {\em INFORMS Journal on Computing}, 32(4):1109--1127.

\bibitem[Scarf, 1958]{Scarf1958}
Scarf, H.~E. (1958).
\newblock A min-max solution of an inventory problem.
\newblock In Arrow, K.~J., Karlin, S., and Scarf, H.~E., editors, {\em Studies
  in the Mathematical Theory of Inventory and Production}. Stanford University
  Press, Palo Alto, CA.

\bibitem[Shapiro et~al., 2009]{shapiro2009lectures}
Shapiro, A., Dentcheva, D., and Ruszczy{\'n}ski, A. (2009).
\newblock {\em Lectures on {S}tochastic {P}rogramming: {M}odeling and
  {T}heory}.
\newblock SIAM, Philadelphia.

\bibitem[Shapiro and Kleywegt, 2002]{shapiro2002minimax}
Shapiro, A. and Kleywegt, A. (2002).
\newblock Minimax analysis of stochastic problems.
\newblock {\em Optimization Methods and Software}, 17(3):523--542.

\bibitem[Silver et~al., 1998]{silver1998inventory}
Silver, E.~A., Pyke, D.~F., and Peterson, R. (1998).
\newblock {\em Inventory Management and Production Planning and Scheduling}.
\newblock John Wiley \& Sons, New York, 3th edition.

\bibitem[Vairaktarakis, 2000]{vairaktarakis2000robust}
Vairaktarakis, G.~L. (2000).
\newblock Robust multi-item newsboy models with a budget constraint.
\newblock {\em International Journal of Production Economics}, 66(3):213--226.

\bibitem[van Eekelen et~al., 2022]{vaneekelen2021mad}
van Eekelen, W., den Hertog, D., and van Leeuwaarden, J. S.~H. (2022).
\newblock {MAD} dispersion measure makes extremal queue analysis simple.
\newblock ePub ahead of print January 12,
  \url{https://doi.org/10.1287/ijoc.2021.1130}.

\bibitem[van Leeuwaarden and Stegehuis, 2021]{stegehuis2021robust}
van Leeuwaarden, J.~S. and Stegehuis, C. (2021).
\newblock Robust subgraph counting with distribution-free random graph
  analysis.
\newblock {\em Physical Review E}, 104(4):044313.

\bibitem[Xu et~al., 2018]{xu2018distributionally}
Xu, H., Liu, Y., and Sun, H. (2018).
\newblock Distributionally robust optimization with matrix moment constraints:
  Lagrange duality and cutting plane methods.
\newblock {\em Mathematical Programming}, 169(2):489--529.

\bibitem[Zhu and Fukushima, 2009]{zhu2009worst}
Zhu, S. and Fukushima, M. (2009).
\newblock Worst-case conditional value-at-risk with application to robust
  portfolio management.
\newblock {\em Operations Research}, 57(5):1155--1168.

\bibitem[Zymler et~al., 2013]{zymler2013distributionally}
Zymler, S., Kuhn, D., and Rustem, B. (2013).
\newblock Distributionally robust joint chance constraints with second-order
  moment information.
\newblock {\em Mathematical Programming}, 137(1):167--198.

\end{thebibliography}
\bibliographystyle{apalike}

\ECSwitch


\ECHead{E-Companion to ``Robust knapsack ordering for a partially-informed newsvendor with  budget constraint''}

\section{Proofs}\label{ec:proofs}


\begin{proof}{Proof of Lemma~\ref{lemma:meanmad1}}
In their original work, \cite{ben1972more} prove this result for general convex functions by dividing the support into two intervals $[a,\mu]$ and $[\mu,b]$ and then applying the Edmundson-Madansky bound to both subintervals. The following proof uses semi-infinite programming duality and is taken from \citet{vaneekelen2021mad}. Consider a general convex function $f(x)$ (this includes $(x-q)^+$ as a special case). 
For $X\sim\mathbb{P}\in\mathcal{P}_{(\mu,\delta)}$, we solve
\begin{equation}\label{test3}
\begin{aligned}
&\max_{\P(x)\geq0} &  &\int_a^b f(x){\rm d} \P(x)\\
&\text{s.t.} &      & \int_a^b {\rm d}\P(x)=1,\  \int_a^b x\,{\rm d}\P(x)=\mu,\ \int_a^b |x-\mu|{\rm d}\P(x)=\delta,   
\end{aligned}
\end{equation}

Consider the dual of \eqref{test3},
\begin{equation}\label{test4}
\begin{aligned}
&\min_{\lambda_0,\lambda_1, \lambda_2} &  &\lambda_0 +\lambda_1 \mu+\lambda_2 \delta\\
&\text{s.t.} &      & M(x):=\lambda_0 +\lambda_1 x + \lambda_2|x-\mu| \geq f(x), \ \forall x\in[a,b].
\end{aligned}
\end{equation}
The function $M(x)$ has a `kink' at $x=\mu$. Since the dual problem \eqref{test4} has three variables, the optimal $M(x)$ touches $f(x)$ at three points: $x=a$, $\mu$ and $b$. For this choice of $M(x)$, 
\begin{align*}
\lambda_0&=f(a)-\lambda_1 a - \lambda_2(\mu-a),\ \lambda_1=\frac12\left(\frac{f(b)-f(\mu)}{b-\mu}+\frac{f(\mu)-f(a)}{\mu-a}\right),\\
\lambda_2&=\frac12\left(\frac{f(b)-f(\mu)}{b-\mu}-\frac{f(\mu)-f(a)}{\mu-a}\right). 
\end{align*}
Because the majorant is piecewise linear and convex, we can majorize every convex function $f(x)$ by letting $M(x)$ touch at the boundary points $a,b$ and at the kink point $x=\mu$. According to the complementary slackness property, these points constitute the support of the extremal distribution, and the optimal probabilities follow from solving the linear system resulting from the equations of \eqref{test3}. This is a linear system of three unknown probabilities and three equations, with the solution
$$
p_{a}  = \frac{\delta}{2(\mu - a)}, \quad p_{\mu}  = 1- \frac{\delta}{2(\mu - a)}  - \frac{\delta}{2(b - \mu)} , \quad p_{b}  = \frac{\delta}{2(b - \mu)}.
$$
Finally, for these primal and dual solutions, we verify that the objective values of problems \eqref{test3} and \eqref{test4} agree, which confirms that strong duality holds.
\end{proof}

\begin{proof}{Proof of Lemma~\ref{lemma:meanmadbeta}}
We prove this result for general convex $f(x)$. For a random variable $X$ with distribution $\mathbb{P}\in\mathcal{P}_{(\mu,d,\beta)}$, the tight lower bound follows from
\begin{equation}\label{eq:primallb}
\begin{aligned}
&\max_{\P(x)\geq0} &  &\int_a^b f(x){\rm d} \P(x)\\
&\text{s.t.} &      & \int_a^b {\rm d}\P(x)=1,\  \int_a^b x\,{\rm d}\P(x)=\mu,\ \int_a^b |x-\mu|{\rm d}\P(x)=\delta,\  \int_a^b \mathbbm{1}_{\{x\geq\mu\}}{\rm d}\P(x)=\beta.  
\end{aligned}
\end{equation}

Consider the dual of \eqref{eq:primallb},
\begin{equation}\label{eq:duallb}
\begin{aligned}
&\min_{\lambda_0,\lambda_1, \lambda_2} &  &\lambda_0 +\lambda_1 \mu+\lambda_2 \delta + \lambda_3 \beta\\
&\text{s.t.} &      & M(x):=\lambda_0 +\lambda_1 x + \lambda_2|x-\mu| + \lambda_3 \mathbbm{1}_{\{x\geq\mu\}} \leq f(x), \ \forall x\in[a,b].
\end{aligned}
\end{equation}
Here $M(x)$ has both a ‘kink’ and a jump discontinuity at $x=\mu$. Let the function $M(x)$ touch the epigraph of $f(x)$ in two points on opposite sides of $\mu$. If we insert this knowledge, the constraints in the dual problem reduce to two equality constraints.
From the Karush-Kuhn-Tucker conditions, we deduce the optimal tangent points:
$$
x_1=\mu+\frac{\delta}{2\beta}, \quad x_2=\mu-\frac{\delta}{2(1-\beta)},
$$
which correspond to $\upsilon_1$ and $\upsilon_2$. Substituting this solution and solving for $\lambda_0, \lambda_1, \lambda_2$ and $\lambda_3$ gives
$$
\lambda_0=f(\upsilon_2)+\frac{(\lambda_1 - \lambda_2)\delta}{2(1-\beta)}-\lambda_1\mu, \quad \lambda_3=f(\upsilon_1)-f(\upsilon_2)+\frac{\lambda_2\delta}{(1-\beta)}-\frac{(\lambda_2+\lambda_1)\delta}{2\beta(1-\beta)},
$$
and hence the optimal value is given by $\beta f(\upsilon_1)+(1-\beta)f(\upsilon_2)$. To ensure the solution is dual feasible, we assign suitable values to the two free decision variables. That is, we let $\lambda_1+\lambda_2$ and $\lambda_1-\lambda_2$ equal the slope of $f(x)$ at $x=\upsilon_1$ and $\upsilon_2$, respectively. The optimal probabilities of \eqref{eq:primallb} are obtained by solving the linear system resulting from \eqref{eq:primallb}.
\end{proof}


\section{Known properties of MAD}\label{ec:propertiesMAD}
We recall some well-known properties of the MAD; see e.g. \cite{ben1985approximation}. Denote by $\sigma^2$ the variance of the random variable $X$, whose distribution is known to belong to the set $\mathcal{P}_{(\mu,\delta)}$. Then
$$
\frac{\delta^2}{4\beta (1-\beta)} \leq \sigma^2 \leq \frac{\delta(b-a)}{2}.
$$
In particular, since
$$
\delta^2 \leq 4\beta (1-\beta) \sigma^2 \leq \sigma^2,
$$
it holds that $\delta \leq \sigma$. For a proof, we refer the reader to \cite{ben1985approximation}. For the distributions used in the paper, explicit formulas for $\delta$ are available:
\begin{itemize}
\item Uniform distribution on $[a,b]$:
$$
\delta = \frac{1}{4}(b-a)
$$
\item Beta distribution with parameters $k,\lambda$ on support $[a,b]$:
$$
\delta = \frac{2k^{k}\lambda^{\lambda}\Gamma(k+\lambda)}{(k+\lambda)^{k+\lambda+1}\Gamma(k)\Gamma(\lambda)}(b-a)
$$
\item Triangular distribution on $[a,b]$ with mode $c$:
$$
\begin{aligned}
\delta = \begin{cases}
\frac{2(b+c-2a)^3}{81(a-b)(a-c)}, \quad & \text{for } a+b<2c, \\
\frac{2(a+c-2b)^3}{81(a-b)(b-c)}, &       \text{for } a+b>2c    
\end{cases}
\end{aligned}
$$
\item Normal distribution $N(\mu,\sigma^2)$:
$$
\delta = \sqrt{\frac{2}{\pi}} \sigma
$$
\item Gamma distribution with parameters $\lambda$ and $k$ (for which $\mu = k/ \lambda$):
$$
\delta = \frac{2k^k}{\Gamma(k)\exp(k)} \frac{1}{\lambda}.
$$
\end{itemize}
The MAD is known to satisfy the bound
\begin{equation}
\label{eq:d_bound}
0 \leq \delta\leq \frac{2(b-\mu)(\mu-a)}{b-a}.
\end{equation}
Let $\beta=\mathbb{P}(X \geq \mu)$. For example, in the case of continuous symmetric distribution of $X$ we know that $\beta = 0.5$. This quantity is known to satisfy the bounds:
\begin{equation}
\label{eq:beta_bound}
\frac{\delta}{2(b-\mu)} \leq \beta \leq  1-\frac{\delta}{2(\mu - a)}.
\end{equation}

\section{The knapsack problem}\label{ec:Knapsack}
The knapsack problem \citep{kellerer2004multiple} is an integer programming problem and can be formulated as 
\begin{equation}\label{appendixModel}
\begin{aligned}
    \max_x& \quad  \sum_{i=1}p_ix_i \\
    \text{s.t. }& \quad \sum_{i=1}^n c_ix_i \leq B, \\
    &x_i \in \{0,1\}, \quad 1=1,\dots,n.
\end{aligned}
\end{equation}
for decision variable $x$, budget $B$, price $p>0$ and costs $c$. Assume $B < \sum_{i=1}^n c_i$. The continuous version is obtained by considering the linear relaxation, i.e., we replace the integrality constraints by $0 \leq x_i \leq 1, \, i=1,\dots,n.$  The so-called greedy choice algorithm  produces an optimal solution for the continuous knapsack problem. 

We first renumber the items $x_i$ such that ${p_1}/{c_1} \geq \dots \geq  {p_n}/{c_n}$. Hence, the first item causes the largest increase in value relative to its costs. We now iterate over $x_1,\dots,x_n$ and in each iteration, set $x_i$ to its maximum capacity. When the budget constraint is violated, set
$$ x_i = B - \sum_{i=1}^{i-1} c_ix_i. $$
This greedy choice algorithm produces the optimal solution to \eqref{appendixModel}. Below we will state its proof, which is an adaptation from the proof in \citet{kellerer2004multiple}. 

Assume that without loss of generality that $ {p_1}/{c_1} > \dots >  {p_n}/{c_n}$. If we would have $ {p_i}/{c_i} =  {p_{i+1}}/{c_{i+1}}$ for some $i$, then we are indifferent between those items and the proof below can be easily adapted to satisfy this. The greedy choice algorithm produces a solution such that, for some index $j$, we have $1=x_1=\dots = x_{j-1} > x_j \geq x_{j+1} = \dots = x_n = 0$. Suppose we would have a different feasible optimal solution $y \neq x$. Since $p_i > 0$ and $\sum_{i=1}^n c_i > B$, it must hold that $\sum_{i=1}^n c_iy_i = B$ as otherwise we could spend additional capital to increase the optimal value. Because $ {p_1}/{c_1} \geq \dots \geq  {p_n}/{c_n}$, there exists a smallest index $k$ such that $y_k <1$ and let $l$ be the smallest index such that $k < l $ and $y_l > 0 $. This solution must exists, else we would have $y=x$. Now, we will increase the value of $y_k$ and decrease the value of $y_l$. By choosing $\epsilon = \min\{c_k(1-y_k),c_ly_l\} > 0 $ and increasing $y_k$ by ${\epsilon}/{c_k}$ and decreasing $y_l$ by ${\epsilon}/{c_l}$, we maintain feasibility and preserve $\sum_{i=1}^nc_iy_i=B$. The solution value  changes by $p_k {\epsilon}/{c_k} - p_l{\epsilon}/{c_l} = \epsilon\left({p_k}/{c_k} - {p_l}/{c_l}\right) > 0 $. This contradicts the assumption that $y$ is an optimal solution. Therefore, $x$ is optimal which concludes the proof.

\section{DRO results}\label{ec:recentDRO}

In \citet{ben1972more}, the following result was proved (for a much larger class of functions $f(\vecc{y},\vec{X})$ than in our case):
\begin{proposition}\label{postekprop}
If $f(\vecc{y},\cdot)$ is convex,
\begin{equation}
\label{eq:convex_constraint_reformulated_general}
\sup\limits_{\mathbb{P} \in \mathcal{P}_{(\mu,\delta)}} \expectp{ f(\vecc{y}, \vec{X})} =  g_{\UU}(\vecc{y}) = \sum\limits_{\bm{\kappa} \in \{1,2,3 \}^{n}} \prod\limits_{i=1}^{n} p_{\kappa_i}^{(i)}  f(\vecc{y},\xi_{\kappa_1}^{(1)} ,\ldots , \xi_{\kappa_{n}}^{(n)}) ,
\end{equation}
with $p_{\kappa_i}^{(i)} , \xi_{\kappa_i}^{(i)} $ defined as in  {\rm Lemma~\ref{lemma:meanmad}}.
If $f(\vecc{y},\cdot)$ is concave,
\begin{equation}
\label{eq:concave_constraint_reformulated_general}
\sup\limits_{\mathbb{P} \in \mathcal{P}_{(\mu,\delta,\beta)}} \expectp{ f(\vecc{y}, \vec{X})} = g_L(\vecc{y}) = \sum\limits_{\bm{\kappa} \in \{1,2\}^{n}} \prod\limits_{i=1}^{n} \hat{p}_{\kappa_i}^{(i)} f( \vecc{y},\upsilon_{\kappa_1}^{(1)} ,\ldots,\upsilon_{\kappa_{n}}^{(n)} ),
\end{equation} 
with $\upsilon_1^{(i)}  = \mu_i + \frac{\delta_i}{2\beta_i},\, \upsilon_2^{(i)}  = \mu_i - \frac{\delta_i}{2(1-\beta_i)}$ and
$\hat{p}_1^{(i)}  = \beta_i,\, \hat{p}_2^{(i)}  = 1-\beta_i.$

\end{proposition}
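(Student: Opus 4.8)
The plan is to read the two displays as mirror images of one another. The convex case \eqref{eq:convex_constraint_reformulated_general} is driven by the three-point majorant of Lemma~\ref{lemma:meanmad1}–Lemma~\ref{lemma:meanmad}, whereas the concave case \eqref{eq:concave_constraint_reformulated_general} follows by applying the same argument to $-f$: since $\sup_{\mathbb{P}}\E_{\mathbb{P}}[f]=-\inf_{\mathbb{P}}\E_{\mathbb{P}}[-f]$ and $-f$ is convex whenever $f$ is concave, the two-point minorant of Lemma~\ref{lemma:meanmadbeta} over $\mathcal{P}_{(\mu,\delta,\beta)}$ yields \eqref{eq:concave_constraint_reformulated_general}. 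So I would prove the convex statement in full and obtain the concave one by this sign flip. For the convex statement there are two inequalities: feasibility of the candidate extremal distribution (easy) and its optimality (hard).

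For the lower bound, let $\mathbb{P}^\ast=\bigotimes_{i=1}^n \mathbb{P}_i^\ast$ be the product of the univariate three-point distributions $\mathbb{P}_i^\ast$ with atoms $\xi_{1}^{(i)},\xi_{2}^{(i)},\xi_{3}^{(i)}$ and weights $p_{1}^{(i)},p_{2}^{(i)},p_{3}^{(i)}$ from Lemma~\ref{lemma:meanmad}. Each marginal $\mathbb{P}_i^\ast$ satisfies $\E[D_i]=\mu_i$, $\E|D_i-\mu_i|=\delta_i$ and $\operatorname{supp}(D_i)\subseteq[a_i,b_i]$, so $\mathbb{P}^\ast\in\mathcal{P}_{(\mu,\delta)}$. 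By definition of a product measure, $\E_{\mathbb{P}^\ast}[f(\vecc{y},\vec{X})]$ equals the right-hand side $g_{\UU}(\vecc{y})$ of \eqref{eq:convex_constraint_reformulated_general}, whence $\sup_{\mathbb{P}\in\mathcal{P}_{(\mu,\delta)}}\E_{\mathbb{P}}[f]\ge g_{\UU}(\vecc{y})$.

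The reverse inequality is the main obstacle. In the separable case relevant here, where $f(\vecc{q},\vec{D})=\sum_{i=1}^n G_i(q_i,D_i)$ with each $G_i$ convex in $D_i$, it is immediate: $\E_{\mathbb{P}}[f]=\sum_i \E_{\mathbb{P}_i}[G_i]$ depends on $\mathbb{P}$ only through its marginals, each term is maximized by $\mathbb{P}_i^\ast$ via Lemma~\ref{lemma:meanmad1}, and marginalizing the product formula recovers $g_{\UU}(\vecc{y})$. For the genuinely non-separable functions in the larger class of \citet{ben1972more} the product distribution is no longer worst-case over \emph{all} couplings with the prescribed marginals (a comonotone coupling can strictly beat it), so one must either exploit separability or restrict to independent coordinates; in the latter regime one bounds $\E_{\mathbb{P}}[f]$ by applying the one-dimensional majorant of Lemma~\ref{lemma:meanmad1} one coordinate at a time inside iterated conditional expectations. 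The step that makes the iteration close is that replacing coordinate $i$ by the convex combination $\sum_{\kappa_i}p_{\kappa_i}^{(i)}f(\dots,\xi_{\kappa_i}^{(i)},\dots)$ preserves convexity in the remaining coordinates, so the bound can be reapplied; running this for $i=1,\dots,n$ produces the product formula, and a strong-duality argument as in the proof of Lemma~\ref{lemma:meanmad1} certifies tightness. I would then close the concave case by repeating the scheme for $-f$ with the two atoms $\upsilon_{1}^{(i)},\upsilon_{2}^{(i)}$ and weights $\beta_i,1-\beta_i$ of Lemma~\ref{lemma:meanmadbeta}, reproducing \eqref{eq:concave_constraint_reformulated_general}.
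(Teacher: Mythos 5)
Your argument is essentially correct, but it takes a different route from the paper: the paper offers no proof of this proposition at all, citing \citet{ben1972more} for the multivariate result and only proving the univariate building blocks (Lemma~\ref{lemma:meanmad1} and Lemma~\ref{lemma:meanmadbeta}) via semi-infinite LP duality in \ref{ec:proofs}. What you supply is a self-contained derivation of the multivariate statement from those univariate lemmas: attainment by the product of the three-point (resp.\ two-point) marginals gives one inequality, and the iterated-conditioning argument --- replacing one coordinate at a time by its worst-case discrete marginal, noting that sections of a jointly convex function are convex and that nonnegative combinations of such sections remain convex in the surviving coordinates --- gives the other; the concave case then follows by the sign flip $\sup_{\mathbb{P}}\E_{\mathbb{P}}[f]=-\inf_{\mathbb{P}}\E_{\mathbb{P}}[-f]$ applied over $\mathcal{P}_{(\mu,\delta,\beta)}$. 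This buys a proof the paper omits, and it also surfaces a point the paper glosses over: as written, the ambiguity set \eqref{eq:meanmadambiguity} constrains only the marginals, and for a genuinely non-separable convex $f$ the supremum over all couplings can strictly exceed the product-distribution value (your comonotone example), so the proposition implicitly requires either independence of the coordinates (as in the source result) or separability of $f$ --- the latter being exactly the regime in which the paper invokes it, since $G(\vecc{q},\vec{D})$ is a sum of univariate terms. Your caveat is therefore not a defect of your proof but a correct observation about the scope of the stated proposition; for the separable objective used throughout the paper, your ``immediate'' marginal-by-marginal argument already suffices and the iterated-conditioning machinery is only needed for the broader class of functions to which the proposition is meant to apply.
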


Hence, $g_{\UU}(\cdot)$ in \eqref{eq:convex_constraint_reformulated_general} inherits the convexity in $\vecc{y}$ from $f(\cdot,\vec{X})$ and its functional form depends only on the form of $f(\cdot,\vec{X})$ (and similarly for $g_L(\cdot)$).
The upper and lower bound give a closed interval for
\begin{equation}
\label{eq:whatever}
\text{Val}_\mathbb{P}(\vecc{y}) = \expectp{ f(\vecc{y}, \vec{X})}  \quad \forall \mathbb{P} \in  \mathcal{P}_{(\mu,\delta,\beta)}.
\end{equation}
\begin{corollary}\label{corollary.Val.bounds}
If $f(\vecc{y},\cdot)$ is convex for all $\vecc{y}$ then
$
{\rm Val}_{\mathbb{P}}(\vecc{y}) \in [g_L(\vecc{y}),g_{\UU}(\vecc{y})] \ \forall \mathbb{P} \in \mathcal{P}_{(\mu,\delta,\beta)}.$
If $f(\vecc{y},\cdot)$ is concave for all $\vecc{y}$ then
$
{\rm Val}_{\mathbb{P}}(\vecc{y}) \in [g_{\UU}(\vecc{y}),g_L(\vecc{y})] \ \forall \mathbb{P} \in \mathcal{P}_{(\mu,\delta,\beta)}.$
\end{corollary}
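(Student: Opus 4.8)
The plan is to obtain both endpoints of the interval directly from Proposition~\ref{postekprop}, by reading its two identities as \emph{two-sided} bounds and then sandwiching $\text{Val}_{\mathbb{P}}(\vecc{y})$ with the help of the inclusion $\mathcal{P}_{(\mu,\delta,\beta)}\subseteq\mathcal{P}_{(\mu,\delta)}$. The observation I would record first is that the right-hand sides $g_{\UU}(\vecc{y})$ and $g_L(\vecc{y})$ in \eqref{eq:convex_constraint_reformulated_general} and \eqref{eq:concave_constraint_reformulated_general} are \emph{linear} in $f$: each is a fixed convex combination of evaluations of $f$ at prescribed atoms, with weights (products of the probabilities $p^{(i)}_{\kappa_i}$, respectively $\hat p^{(i)}_{\kappa_i}$) that do not depend on $f$. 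Since neither ambiguity set depends on $f$ either, replacing $f$ by $-f$ negates both $g_{\UU}$ and $g_L$ while turning each supremum into an infimum via $\sup_{\mathbb{P}}\mathbb{E}_{\mathbb{P}}[-f]=-\inf_{\mathbb{P}}\mathbb{E}_{\mathbb{P}}[f]$.

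Applying this sign flip to Proposition~\ref{postekprop} would give me the two companion infimum statements I need: for convex $f(\vecc{y},\cdot)$, $g_L(\vecc{y})=\inf_{\mathbb{P}\in\mathcal{P}_{(\mu,\delta,\beta)}}\mathbb{E}_{\mathbb{P}}[f(\vecc{y},\vec{X})]$, and for concave $f(\vecc{y},\cdot)$, $g_{\UU}(\vecc{y})=\inf_{\mathbb{P}\in\mathcal{P}_{(\mu,\delta)}}\mathbb{E}_{\mathbb{P}}[f(\vecc{y},\vec{X})]$. With these in hand the bounds follow by chaining inequalities. For convex $f$ and any $\mathbb{P}\in\mathcal{P}_{(\mu,\delta,\beta)}$, the infimum characterization of $g_L$ gives $g_L(\vecc{y})\le\text{Val}_{\mathbb{P}}(\vecc{y})$, while the inclusion $\mathcal{P}_{(\mu,\delta,\beta)}\subseteq\mathcal{P}_{(\mu,\delta)}$ together with the supremum identity of Proposition~\ref{postekprop} gives $\text{Val}_{\mathbb{P}}(\vecc{y})\le g_{\UU}(\vecc{y})$, so that $\text{Val}_{\mathbb{P}}(\vecc{y})\in[g_L(\vecc{y}),g_{\UU}(\vecc{y})]$. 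The concave case is the mirror image: the infimum identity for $g_{\UU}$ over the larger set bounds $\text{Val}_{\mathbb{P}}$ from below, and the supremum identity for $g_L$ over $\mathcal{P}_{(\mu,\delta,\beta)}$ bounds it from above, yielding $\text{Val}_{\mathbb{P}}(\vecc{y})\in[g_{\UU}(\vecc{y}),g_L(\vecc{y})]$.

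The only step that warrants genuine care, and the one I would treat as the main (if mild) obstacle, is justifying that Proposition~\ref{postekprop} may indeed be re-read as an infimum statement after the substitution $f\mapsto -f$. This hinges on the two facts isolated above: that $g_{\UU}$ and $g_L$ are linear in their integrand, and that $\mathcal{P}_{(\mu,\delta)}$ and $\mathcal{P}_{(\mu,\delta,\beta)}$ are unchanged by the sign flip. Once these are noted, the extremal three- and two-point distributions furnished by the proposition remain feasible and extremal for $-f$, so the infimum characterizations are immediate and the remainder of the argument is a routine sandwich driven solely by the set inclusion $\mathcal{P}_{(\mu,\delta,\beta)}\subseteq\mathcal{P}_{(\mu,\delta)}$.
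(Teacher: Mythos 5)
Your proposal is correct and is exactly the (unwritten) argument the paper intends: the corollary is stated as an immediate consequence of Proposition~\ref{postekprop}, obtained by combining the supremum identities with the inclusion $\mathcal{P}_{(\mu,\delta,\beta)}\subseteq\mathcal{P}_{(\mu,\delta)}$ and the sign-flip $f\mapsto -f$ (valid because $g_{\UU}$ and $g_L$ are fixed convex combinations of evaluations of $f$ at atoms that do not depend on $f$). Your chaining of the resulting infimum and supremum characterizations in both the convex and concave cases is exactly the intended sandwich.
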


From Proposition~\ref{postekprop} we see that the extremal distribution is independent of $\vecc{y}$. Hence, we can substitute the $3^n$ terms. This leads to a convex function in $\vecc{y}$, and hence the minimization problem over $\vecc{y}$ is tractable.

\section{Robust analysis with mean-variance knowledge}\label{sec:scarfgallegomoon}

\subsection{Scarf's result for single item}
 \citet{Scarf1958} introduced a distribution-free analysis for the single-item newsvendor model by assuming that the decision maker only knows the mean and variance of the demand. Define the ambiguity set containing all distributions with the same mean and variance as
$$    \mathcal{P}_{(\mu,\sigma)} := \{ \mathbb{P} \,|\, \mathbb{E}_{\mathbb{P}}(D) = \mu, \, \mathbb{E}_{\mathbb{P}}(D^2)= \sigma^2 + \mu^2 \}.$$
 \citet{Scarf1958} determined an upper bound on the cost function $C(q)$
 by finding the worst-case distribution in the ambiguity set. To find the order quantity that protects against the ambiguity in $\mathcal{P}_{(\mu,\sigma)}$, the following \textit{minimax} optimization problem is solved:
 $$
    \min_q \max_{\mathbb{P} \in \mathcal{P}_{(\mu,\sigma)}} dq + (m+d)\mathbb{E}_{\mathbb{P}}(D-q)^+.
 $$
Since
\begin{equation*}
\max_{\mathbb{P} \in  \mathcal{P}_{(\mu,\sigma)}}\mathbb{E}_{\mathbb{P}}(D-q)^+
\leq  \dfrac{\sqrt{\sigma^2 + (\mu-q)^2}+(\mu-q)}{2}, 
\end{equation*}
this {minimax} optimization problem becomes 
$ \min_q \max_{\mathbb{P}}  C^S(q)$ with 
\begin{equation}\label{eq:meanVarInequality}
 C^S(q) :=  d(q-\mu) + (m+d) \dfrac{\sqrt{\sigma^2 + (\mu-q)^2}+(\mu-q)}{2}. 
\end{equation}
and solution
\begin{equation}\label{eq:ScarfsRule}
  q^S := \argmin_q C^S(q)=  \mu + \dfrac{\sigma}{2} \left(\sqrt{\dfrac{m}{d}} - \sqrt{\dfrac{d}{m}} \right).
\end{equation}
The quantity $q^S$ is known as Scarf's order quantity which prescribes to order more than the expected demand  when $m > d$, and less than the expected demand when $d < m $. 

\subsection{Gallego and Moon}
When the model is based on mean-variance information, \citet{gallego1993distribution} formulate the problem as
\begin{align}
    \min_{\vecc{q}} C^{S}(\vecc{q}) := &\sum_{i=1}^{n} c_{i}\left(d_{i} (q_{i}-\mu_i)+\left(m_{i}+d_{i}\right) \frac{\sqrt{\sigma_{i}^{2}+\left(q_{i}-\mu_{i}\right)^{2}}-\left(q_{i}-\mu_{i}\right)}{2}\right) \nonumber
        \label{model:multiItemMeanVariance}\\ 
    \text{s.t.} \quad &\sum_{i=1}^n c_i q_i \leq B, \\
    &q \geq 0 \nonumber. 
\end{align}
The optimal solution to problem \eqref{model:multiItemMeanVariance} is referred to as $\vecc{q}^S$. Applying Scarf's bound for each item individually results in \eqref{model:multiItemMeanVariance}. Similar to the full information setting with a known distribution, this optimization problem can be solved with Lagrange multiplier techniques.

\section{Additional numerical experiments}\label{ec:moreresults}
This section presents additional numerical results. Section~\ref{ec:moremeanmad} presents the performance plots for the average margin setting. We compare the mean-MAD and mean-variance ordering policies in Section~\ref{ec:meanvarcomp}.

\begin{figure}[h!]
    \centering
    \includegraphics[width=.8\linewidth]{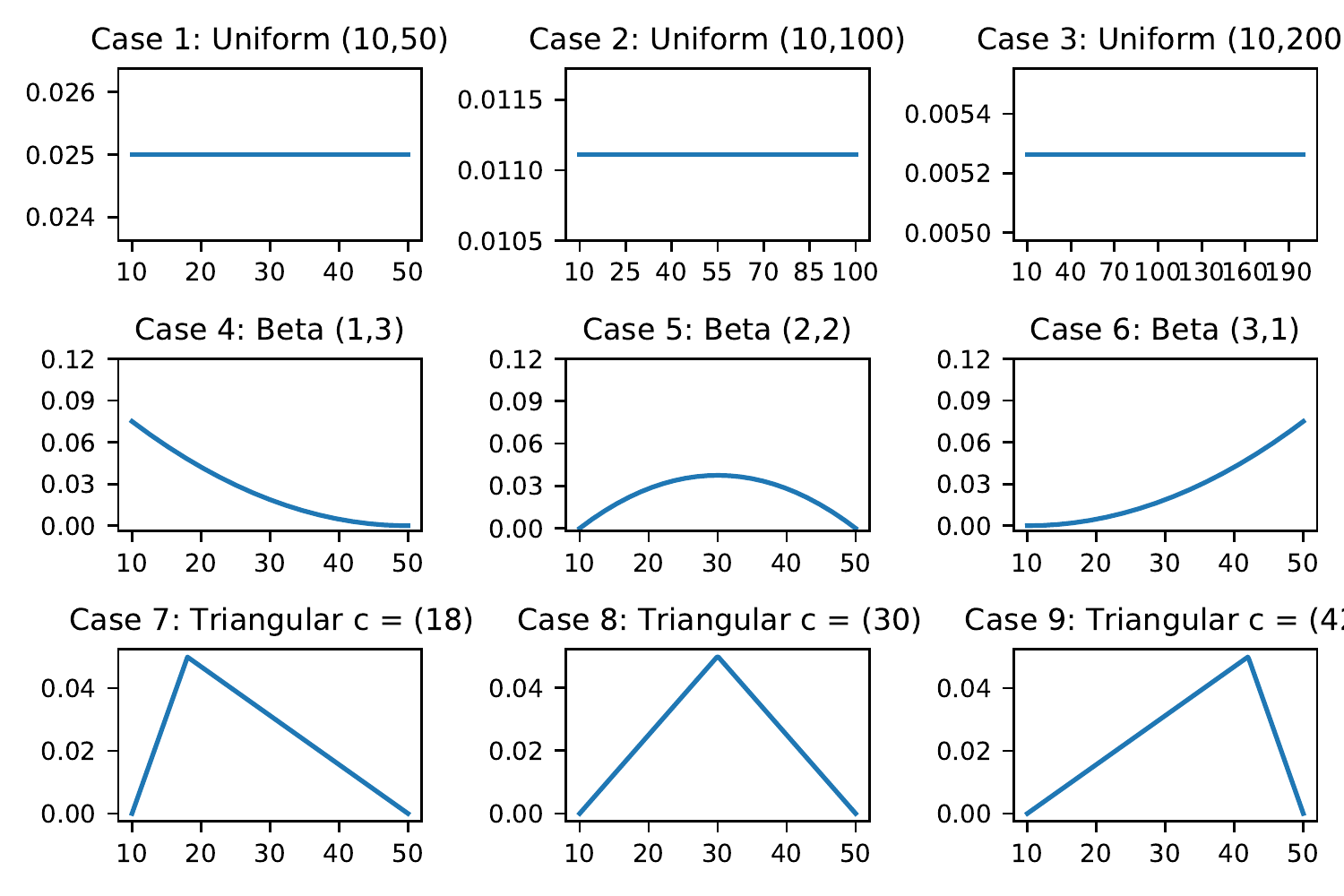}
    \caption{Nine probability density functions used for multi-item performance analysis}
    \label{densitySingleItemPerformance}
\end{figure}

\subsection{More mean-MAD results}\label{ec:moremeanmad}
Figure~\ref{performanceMADaverage} depicts the results for the average profitability scenario. A quick glance reveals that these plots exhibit a different impression than the low profitability scenario. We conclude that the mean-MAD EVAI remains below some bound for budget levels ranging from zero to two-thirds of the maximum budget. For all cases, this bound on the EVAI is around 10\%.
As the budget passes two-thirds of the maximum budget, the performance starts to decrease. However, the mean-MAD-$\beta$ EVAI decreases when approaching the maximal budget.

\begin{figure}[h]
    \centering
    \includegraphics[width=.85\linewidth]{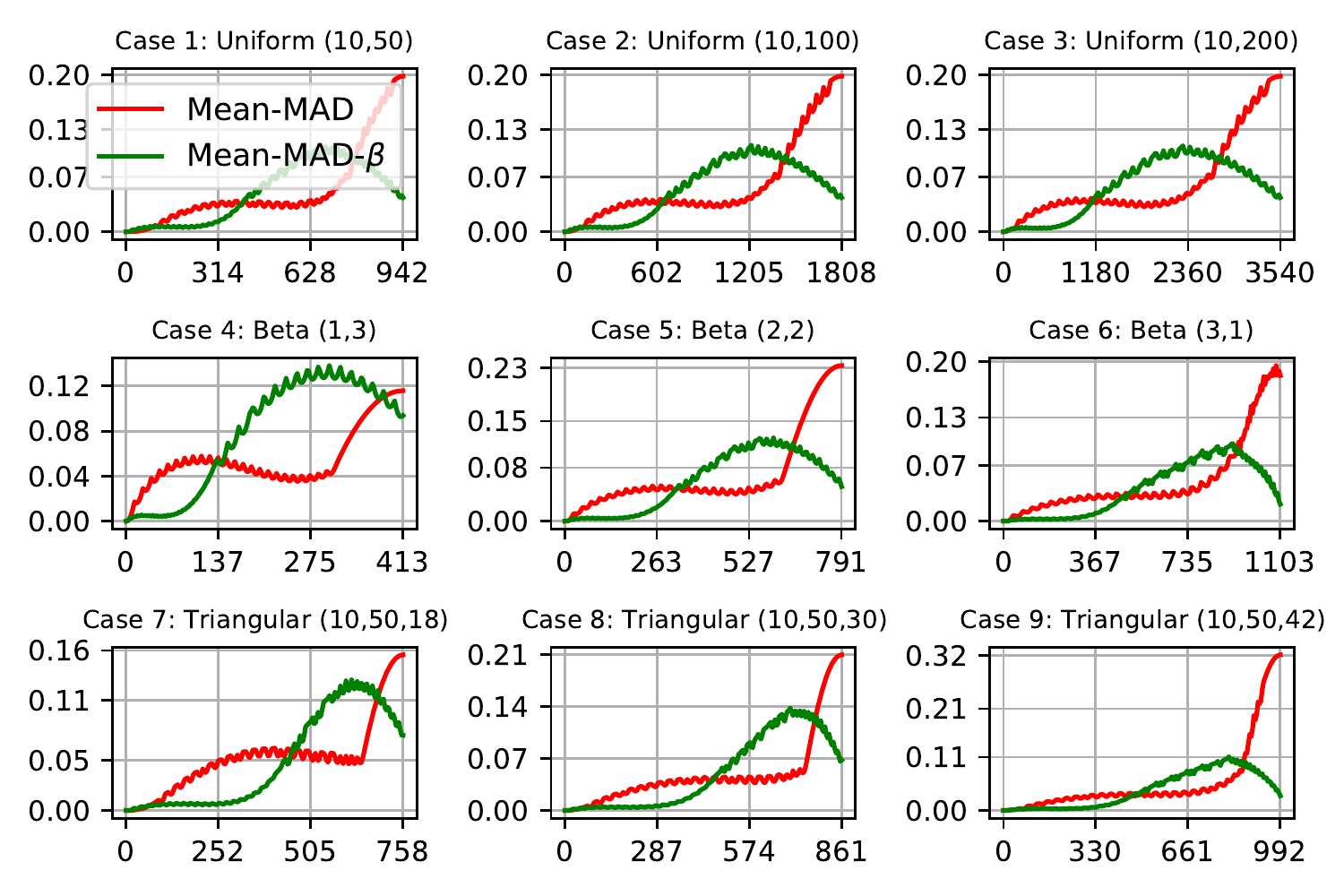}
    \caption{The results for the average margin setting. The x-axis corresponds to $B$ and the y-axis to the EVAI.}
    \label{performanceMADaverage}
\end{figure}

\subsection{Mean-variance comparison}\label{ec:meanvarcomp}
We start the performance analysis for the low margin scenario. The x-axis refers to the budget level $B$, and the y-axis refers to the EVAI. In each plot, the blue line corresponds to the EVAI for the mean-MAD model and the orange line to the mean-variance EVAI. Figure~\ref{performanceN25low} contains the performance plots for each of the nine cases we are considering.

\begin{figure}[h]
    \centering
    \includegraphics[width=.85\linewidth]{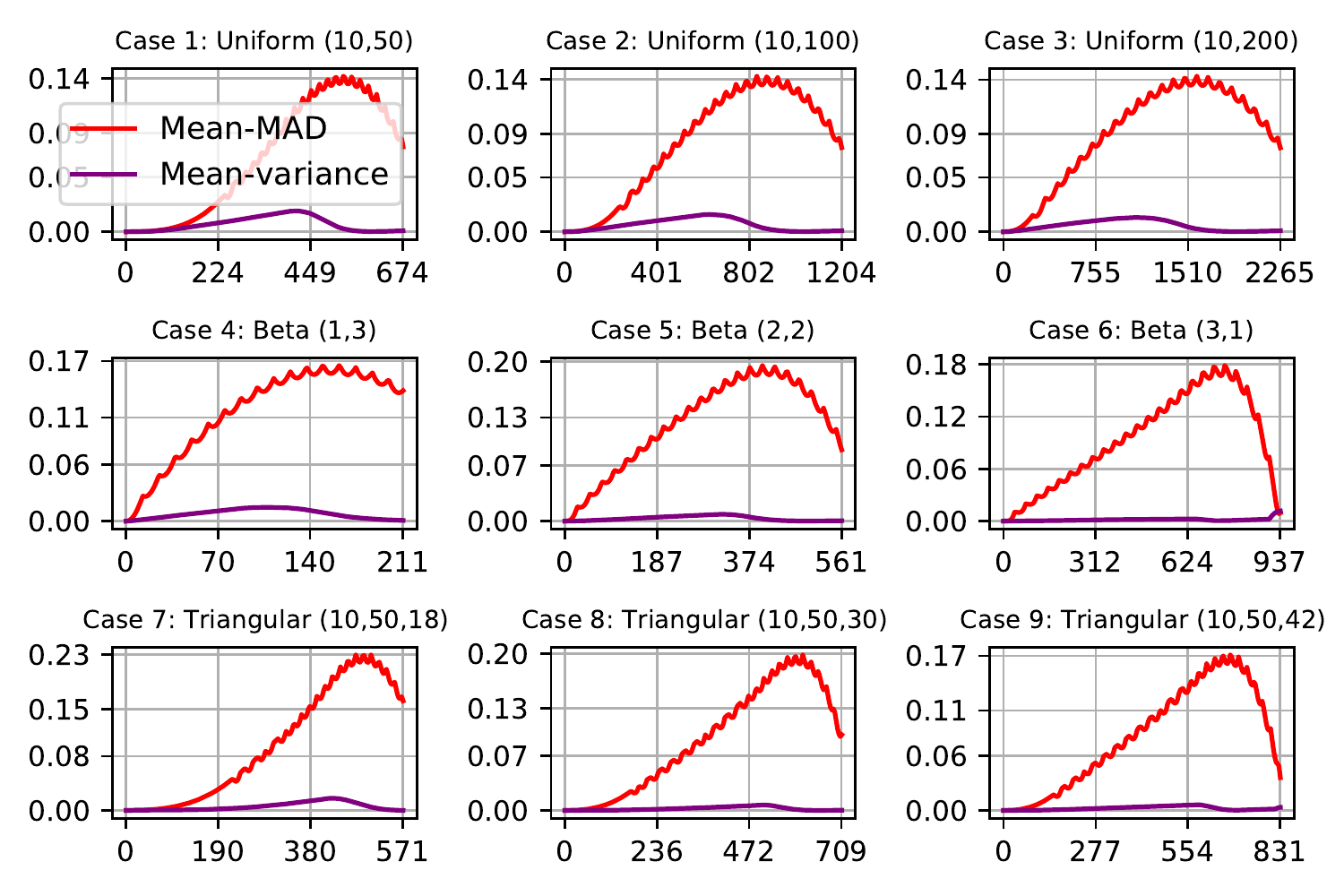}
    \caption{The results for the low margin scenario. The x-axis corresponds to the budget level and the y-axis to the EVAI.}
    \label{performanceN25low}
\end{figure}



In Figure~\ref{performanceN25low} we compare the mean-MAD policy with the mean-variance ordering policy in terms of EVAI for the scenario with low margins and a total of nine ground-truth demand distributions. 
While both policies generally give low EVAIs, the EVAI  
  of the mean-variance policy is typically lower.
We stress that this does not mean that the mean-variance policy is better. 
Indeed, a fair numerical comparison is impossible, as the respective ambiguity sets can contain vastly different distributions. While a finite variance excludes distributions with infinite-second moment, MAD does not. In general, the worst-case scenarios or extremal distributions are `more extreme' for MAD than for variance. This also offers a possible explanation for the slightly higher EVAI.

\section{Extensions}\label{sec:4}
We now present a distribution-free analysis for three extensions of the multi-item newsvendor model. 
Section~\ref{sec:moreconstraints} deals with multiple constraints, Section~\ref{sec:uncertainsupply} considers uncertain supply, and Section~\ref{sec:cvar} discusses the risk-averse newsvendor where the conditional value at risk (CVaR) is chosen as objective function.

\subsection{Multiple constraints}\label{sec:moreconstraints}
\citet{lau1996newsstand} consider the newsvendor problem with multiple constraints, and propose a numerical solution procedure that computes the Lagrange multipliers as roots of a system of nonlinear equations. \citet{perakis2020leveraging} also consider multiple capacity constraints in a retail environment, and distinguish between warehouse capacity and inventory availability constraints. By exploiting Lagrangian duality the problem is decomposed into two subproblems, which are solved iteratively by binary search. 

We now argue that the distribution-free analysis developed in the present paper also carries over to the setting with multiple 
constraints, and takes the form
\begin{equation}\label{eq:newsmaxform}
\begin{aligned}
\min _{\vecc{q}} & \sum_{i=1}^{n} c_{i}\left(d_{i} (q_{i}-\mu_i)+\left(m_{i}+d_{i}\right)\left(p^{(i)}_{1}\left(a_{i}-q_{i}\right)^{+}+p^{(i)}_{2}\left(\mu_{i}-q_{i}\right)^{+}+p^{(i)}_{3}\left(b_{i}-q_{i}\right)^{+}\right)\right)  \\
\text {s.t. } & \sum_{i=1}^{n} c_{i,j} q_{i} \leq B_j \quad  j=1,\ldots,m \\
& q_{i} \geq 0 \quad  i=1, \ldots, n.
\end{aligned}
\end{equation}
By introducing dummy variables $\tau^{(i)}_{k}$, we reformulate problem \eqref{eq:newsmaxform} as
\begin{equation}\label{eq:newslpform}
\begin{aligned}
\min _{\vecc{q},\boldsymbol{\tau}} &  \sum_{i=1}^{n} c_{i}\left(d_{i} (q_{i}-\mu_i)+\left(m_{i}+d_{i}\right) \left(p^{(i)}_{1}\tau^{(i)}_{1}+p^{(i)}_{2}\tau^{(i)}_{2}+p^{(i)}_{3}\tau^{(i)}_{3}\right)\right)   \\
\text {s.t. } & \sum_{i=1}^{n} c_{i,j} q_{i} \leq B_j, &  j=1,\ldots,m, \\
& \tau^{(i)}_{k} \geq \xi^{(i)}_{k}-q_{i}, & k=1,2,3; \; i=1, \ldots, n, \\
& \tau^{(i)}_{k} \geq 0, & k=1,2,3; \; i=1, \ldots, n, \\
& q_{i} \geq 0,  & i=1, \ldots, n,
\end{aligned}
\end{equation}
which remains a tractable LP,
solvable for large-scale problems with interior-point methods.
Moreover, by solving the dual problem of \eqref{eq:newslpform}, shadow prices of the $m$ budget constraints can be computed that quantify marginal expected net benefit of allocating an additional unit of budget to $B_j,\ j=1,\ldots m$. 

\subsection{Supply and demand uncertainty}\label{sec:uncertainsupply}
The newsvendor might take different decisions when the delivery of an order for $q$ units is not necessarily complete (uncertain supply). 
\citet{kaki2015newsvendor} consider  uncertain supply and uncertain demand, when supply and demand are independent or follow a particular copula-based dependency structure. In the mean-variance setting and under the independence assumption, \citet{gallego1993distribution} solve the distribution-free newsvendor problem with random yield, but assume the yield is a binomial random variable that depends on the order size $q$. That is, when an order for $q$ units is made, each individual unit is received with some fixed probability, or is not delivered at all.

As opposed to \citet{gallego1993distribution}, we do introduce an ambiguity set for the random supply. Consider the setting with multiplicative yield $Z_i$, where the random supply is given by $Z_i\cdot q_i$. Assume $Z_i$ 
has mean $\tilde{\mu_i}$, MAD $\tilde{\delta_i}$ and support $[\tilde{a}_i,\tilde{b}_i]$, where $0\leq \tilde{a}_i\leq \tilde{b}_i\leq1$. The distribution of $Z_i$ then resides in $\cP_{(\tilde{\mu}_i,\tilde{\delta}_i)}$. 
The extremal three-point distribution for $Z_i$ has probabilities
$$
\tilde{p}^{(i)}_{1}=\frac{\tilde{\delta}_{i}}{2\left(\tilde{\mu}_{i}-\tilde{a}_{i}\right)}, \quad \tilde{p}^{(i)}_{2}=1-\frac{\tilde{\delta}_{i}}{2\left(\tilde{\mu}_{i}-\tilde{a}_{i}\right)}-\frac{\tilde{\delta}_{i}}{2(\tilde{b}_{i}-\tilde{\mu}_{i})}, \quad \tilde{p}^{(i)}_{3}=\frac{\tilde{\delta}_{i}}{2(\tilde{b}_{i}-\tilde{\mu}_{i})},
$$
and is supported on $\zeta^{(i)}_{1}=\tilde{a}_i, \, \zeta^{(i)}_{2}=\tilde{\mu}_i, \,  \zeta^{(i)}_{3}=\tilde{b}_i,$ respectively.
The multi-item newsvendor with supply ambiguity is equivalent to
\begin{equation}\label{eq:uncertainsupply}
\begin{aligned}
\min _{\vecc{q}} & \sum_{i=1}^{n} \max_{\P\in\cP_i} \E_\P \left[c_{i}\left(d_{i} (Z_i\cdot q_{i} - D_i)+\left(m_{i}+d_{i}\right)(D_i - Z_i\cdot q_i)^+\right)\right]  \\
\text {s.t. } & \sum_{i=1}^{n} c_{i} q_{i} \leq B_j, \quad  j=1,\ldots,m, \\
& q_{i} \geq 0, \quad  i=1, \ldots, n,
\end{aligned}
\end{equation}
with $\cP_i:=\cP_{(\mu_i,\delta_i)}\times\cP_{(\tilde{\mu}_i,\tilde{\delta}_i)}$. Since the newsvendor problem is jointly convex in the pairwise independent random variables $D_i$ and $Z_i$, the distributions that maximize the objective function of \eqref{eq:uncertainsupply} are the extremal three-point distributions. Applying these worst-case distributions to \eqref{eq:uncertainsupply} results in
\begin{equation}\label{eq:lpuncertainsupply}
\begin{aligned}
\min _{\vecc{q}} &  \sum_{i=1}^{n} c_{i}\Big(d_{i} (\tilde{\mu_i}q_{i} - \mu_i)+\left(m_{i}+d_{i}\right)\! \sum_{\boldsymbol{\kappa}\in\{1,2,3\}^2}p^{(i)}_{\kappa_1}  \tilde{p}^{(i)}_{\kappa_2}  \tau^{(i)}_{\boldsymbol{\kappa}}\Big)   \\
\text {s.t. } & \sum_{i=1}^{n} c_i q_{i} \leq B, &   \\
& \tau^{(i)}_{\boldsymbol{\kappa}} \geq \xi^{(i)}_{\kappa_1}-\zeta^{(i)}_{\kappa_2}q_{i}, & \bm{\kappa}\in\{1,2,3\}^2; \; i=1, \ldots, n, \\
& \tau^{(i)}_{\bm{\kappa}} \geq 0, & \bm{\kappa}\in\{1,2,3\}^2; \; i=1, \ldots, n, \\
& q_{i} \geq 0,  & i=1, \ldots, n.
\end{aligned}
\end{equation}

To demonstrate the distribution-free newsvendor with uncertain supply, consider the one-dimensional case with random demand $D$ with a uniform distribution on $[20,80]$ and multiplicative yield $Z$ uniformly distributed on $[0.65,0.95]$. Figure~\ref{fig:supplyexample} depicts the tight lower and upper bounds that follow from optimizing over the ambiguity sets that contain the distributions of $D$ and $Z$. As the extremal distributions are discrete, the objective function of \eqref{eq:lpuncertainsupply} admits a piecewise linear representation. 

\begin{figure}[h!]
    \centering
    \includegraphics[width=0.6\textwidth]{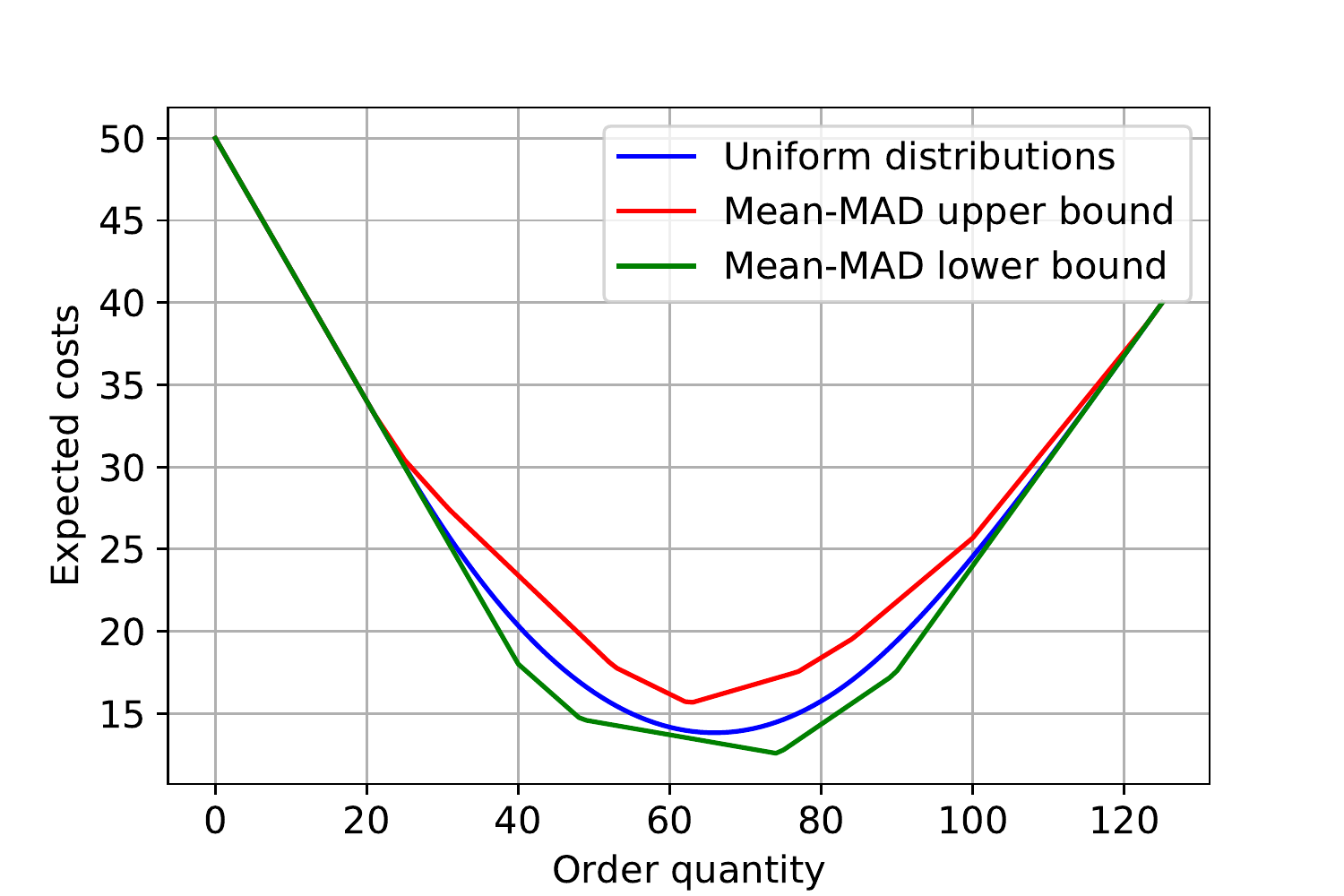}
    \caption{Tight bounds for the multi-item newsvendor with uncertain supply yield, where $m=1$ and $d=0.8$. The upper piecewise linear function is obtained by evaluating $\E[D-Z\cdot q]$, with $D$ following the extremal distribution that lies in $\cP_{(50,15,20,80)}$ and $Z$ the worst-case three-point distribution in $\cP_{(0.8,0.075,0.65,0.95)}$. The lower bound follows from the best-case two-point distributions. The middle curve depicts the ‘true’ costs, where $D$ has a uniform distribution on $[20,80]$, and $Z$ is uniformly distributed on $[0.65,0.95]$.}
    \label{fig:supplyexample}
\end{figure}

Because problem \eqref{eq:uncertainsupply} can be written in terms of a piecewise linear function, the optimal solution follows from a knapsack algorithm similar to Theorem~\ref{thm:AllocationAlgorithm}. Further, one can gain additional insights by explicitly deriving the optimal order quantities for the robust single-item model, as in Theorem~\ref{thm:singleMadStrategy}. The problem is similar for additive yield, also resulting in a three-point distribution for the worst case. 
Other directions for future research include solving \eqref{eq:uncertainsupply} with multiple unreliable and non-identical suppliers \citep{dada2007newsvendor} and the newsvendor problem with fixed ordering costs and supplier capacity restrictions \citep{merzifonluoglu2014newsvendor}.

\subsection{Risk aversion}\label{sec:cvar}
We next consider a risk-averse decision maker, as in \citet{chen2010cvar}, who makes decisions based on $\operatorname{CVaR}$. The decision maker no longer optimizes the expected costs, but instead minimizes the average value of the costs exceeding the $\gamma$th-quantile of the newsvendor's cost distribution. For the cost function $G(\vecc{q},\vec{D})$, $\operatorname{CVaR}$ can be calculated by solving a convex minimization problem  \citep{rockafellar2000optimization}:
$$
\min_{\theta\in\mathbb{R}}\left\{\theta + \frac{1}{1-\gamma}\E(G(\vecc{q},\vec{D})-\theta)^+\right\}.
$$
Calculating $\operatorname{CVaR}$ requires full knowledge of the demand distribution. However, in practice, committing to a particular distribution might be problematic for the decision maker if there is not enough data available. Hence, we consider the partial information setting as in \citet{zhu2009worst,Delage2010}, and seek to solve
\begin{equation}\label{eq:CVaR}
\min_{\vecc{q}:\sum_i c_iq_i\leq B,q_i\geq0}\, \max_{\P\in\cP_{(\mu,\delta)}}\min_{\theta\in\mathbb{R}}\left\{\theta + \frac{1}{1-\gamma}\E_{\P}(G(\vecc{q},\vec{D})-\theta)^+\right\}.
\end{equation}
Let us first consider the single-item model. Because the objective function of \eqref{eq:CVaR} is finite, $\cP_{(\mu,\delta)}$ is weakly compact as $\text{supp}(D)$ is compact, and the objective function of \eqref{eq:CVaR} is linear in $\P$ and convex in $\theta$, we are allowed to interchange the maximization and minimization operators by virtue of the minimax theorem \citep{shapiro2002minimax}. 
Since $\left(G(q,D)-\theta\right)^+$ is a convex function of the uncertain demand, the three-point distribution \eqref{eq:BenTal_probabilities_multivariable} also maximizes $\E_{\P}(G(q,D)-\theta)^+$. 
When $\beta=\P(D\geq\mu)$ is known, the two-point distribution in Lemma~\ref{lemma:meanmadbeta} attains the matching lower bound. For the multivariate problem, 
notice that $(G(\vecc{q},\vec{D})-\theta)^+$ is again a convex function of the uncertain demand, where $\vec{D}\sim\P\in\cP_{(\mu,\delta)}$. By Proposition~\ref{postekprop} and the reasoning above, the risk-averse newsvendor admits the following LP representation:
\begin{equation}\label{eq:lpriskaverse}
\begin{aligned}
\min _{\vecc{q},\boldsymbol{\tau},\boldsymbol{\eta},\theta}\  &  \theta + \frac{1}{1-\gamma} \sum_{\boldsymbol{\kappa}\in\{1,2,3\}^n}\prod\limits_{i=1}^{n}\, p_{\kappa_i}^{(i)}  \eta_{\boldsymbol{\kappa}}   \\
\text {s.t. } & \sum_{i=1}^{n} c_i q_{i} \leq B, &   \\
& \eta_{\boldsymbol{\kappa}} \geq \Big(\sum_{i=1}^{n} c_{i}\left(d_{i} (q_{i}-\xi_{\kappa_i}^{(i)})+\left(m_{i}+d_{i}\right)\tau^{(i)}_{\boldsymbol{\kappa}}\right)\Big)-\theta, & \boldsymbol{\kappa}\in\{1,2,3\}^n, \\
& \eta_{\boldsymbol{\kappa}} \geq0, & \boldsymbol{\kappa}\in\{1,2,3\}^n, \\
& \tau^{(i)}_{\boldsymbol{\kappa}} \geq \xi^{(i)}_{\kappa_i}-q_{i}, & \boldsymbol{\kappa}\in\{1,2,3\}^n; \; i=1, \ldots, n, \\
& \tau^{(i)}_{\boldsymbol{\kappa}} \geq 0, & \boldsymbol{\kappa}\in\{1,2,3\}^n; \; i=1, \ldots, n, \\
& q_{i} \geq 0,  & i=1, \ldots, n.
\end{aligned}
\end{equation}

We show in Figure~\ref{fig:exampleCVaR} the bounds for the single-item model with demand having support $[10,50]$, $\mu=30$, $\delta=20/3$ and $\beta=1/2$. 
Solving $\eqref{eq:lpriskaverse}$ for $\gamma=0.75,0.95$ and different order sizes yields the upper bounds. We solve an analogous problem, but with the expectation taken over the extremal two-point distribution, stated in Lemma~\ref{lemma:meanmadbeta}, to obtain the tight lower bounds. As a point of reference, we also plot the exact values of the CVaR and expected costs when $D$ follows a symmetric triangular distribution on $[10,50]$.

\begin{figure}[h]
\centering
\begin{subfigure}{.5\textwidth}
  \centering
  \includegraphics[width=\linewidth]{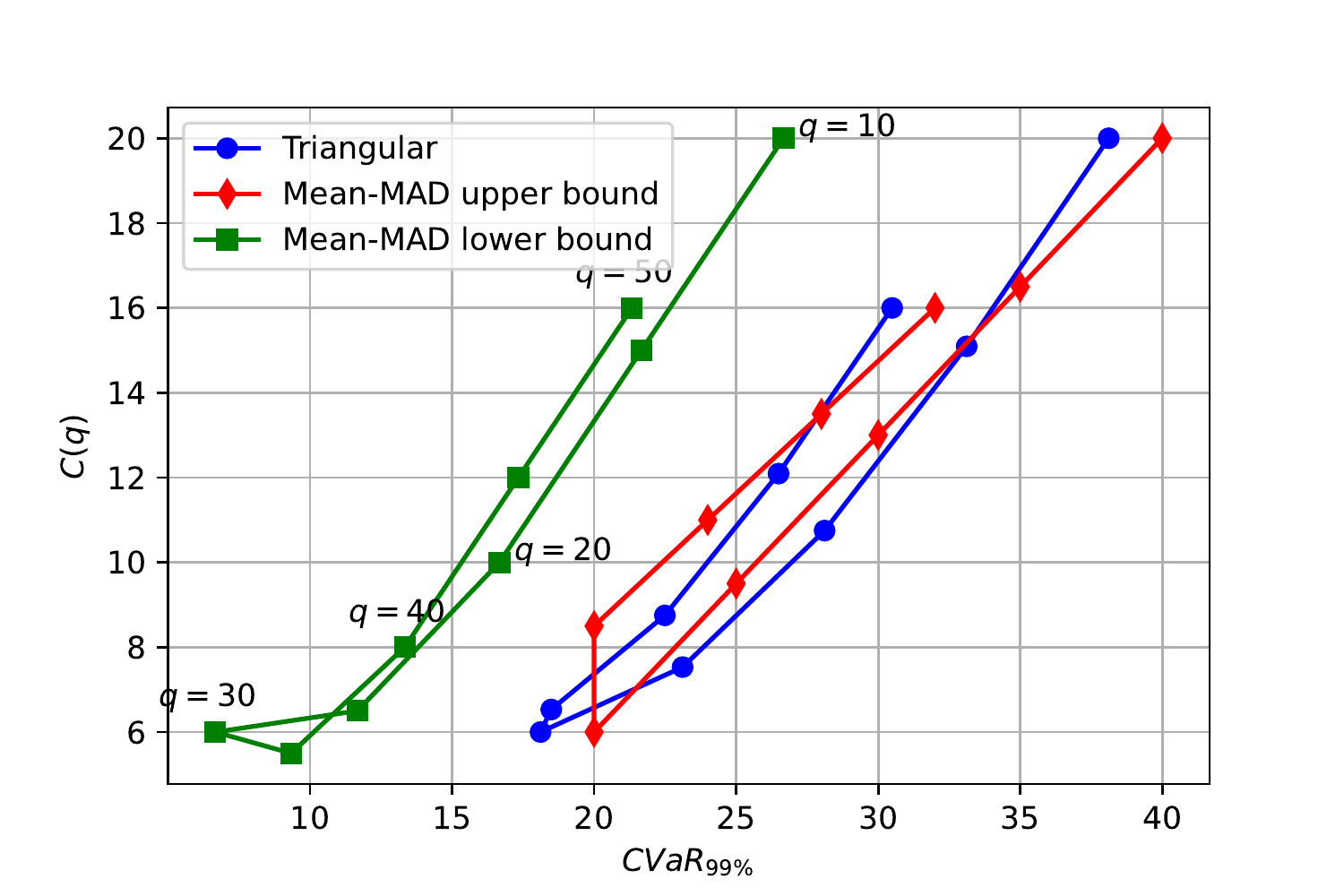}
    \caption{Expected costs and CVaR}
  \label{fig:exCVaRcosts}
\end{subfigure}%
\begin{subfigure}{.5\textwidth}
  \centering
  \includegraphics[width=\linewidth]{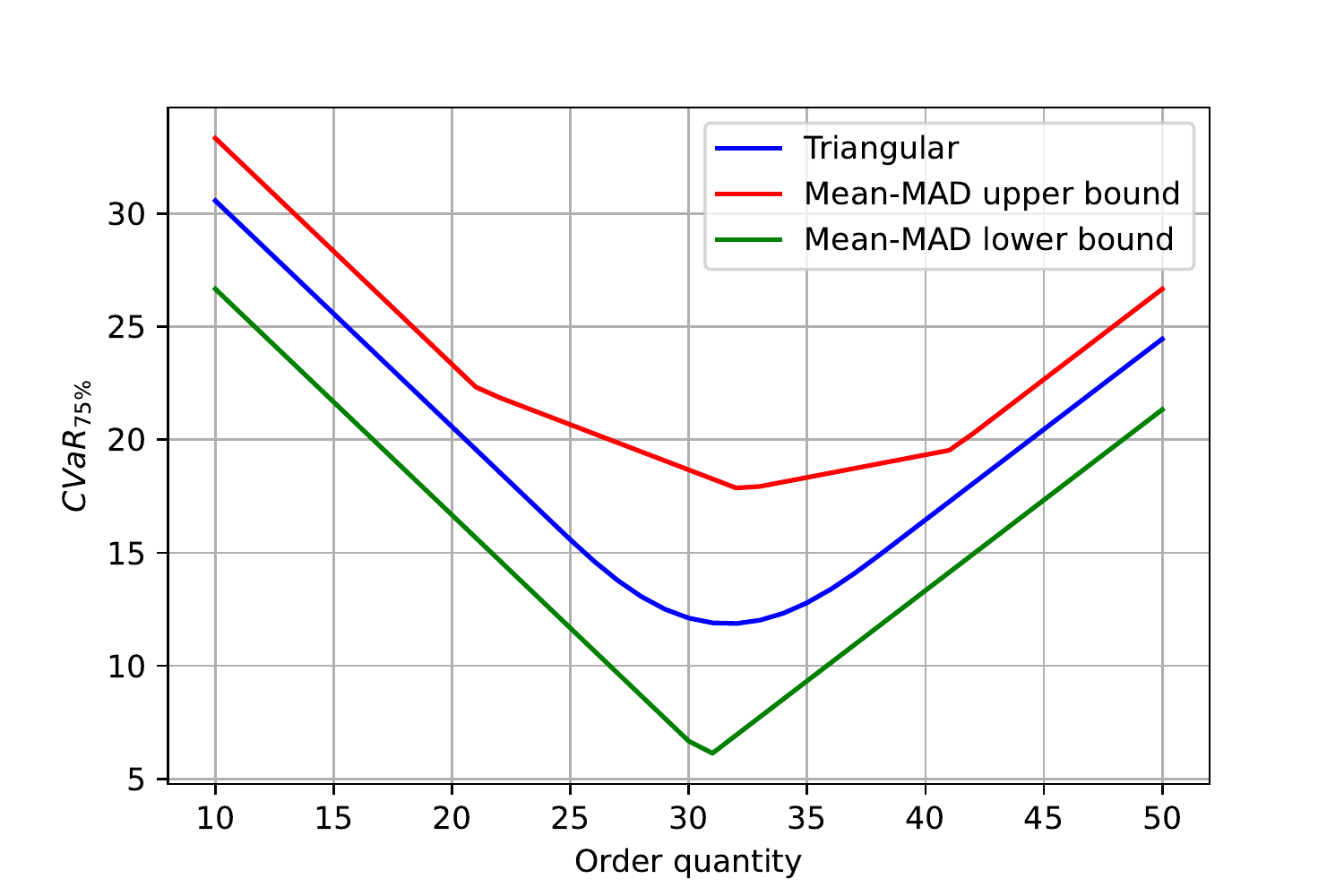}
  \caption{Mean-MAD  bounds for CVaR}
  \label{fig:exCVaRbounds}
\end{subfigure}
\caption{An illustration of the tight mean-MAD bounds for the risk-averse newsvendor with CVAR as objective criterion, where $m=1$, $d=0.8$ and $\gamma=0.75,0.99$. The middle curve corresponds to the CVaR when $D$ follows a symmetric triangular distribution on $[10,50]$. The upper and lower bounds follow from optimizing over the ambiguity sets that contain this distribution.}
\label{fig:exampleCVaR}
\end{figure}

Solving \eqref{eq:lpriskaverse} can be challenging since the objective function $(G(\vecc{q},\vec{D})-\theta)^+$ is no longer separable, thus resulting in an exponential number of variables and constraints. To alleviate this computational difficulty, one might resort to sampling-based procedures such as sample average approximation \citep{shapiro2009lectures}. 

We also mention ambiguous chance constraints that can be conservatively approximated by CVaR \citep{nemirovski2007convex}. In the risk-averse newsvendor setting, the decision maker introduces an ambiguous chance constraint that restricts the probability of the costs exceeding a certain threshold $t$ to be less than $1-\gamma$, considering all distributions in the ambiguity set. For the multi-item setting, this means ensuring
$$
\P(G(\vecc{q},\vec{D})>t)\leq1-\gamma, \quad \forall\P\in\cP_{(\mu,\delta)},
$$
which is implied by
$$
\max_{\P\in\cP_{(\mu,\delta)}}\operatorname{CVaR}_{\gamma}[G(\vecc{q},\vec{D})]\leq t.
$$
In addition, the newsvendor might require a minimal probability that all customer orders will be completely covered by the inventory on hand, i.e., the type-1 service level \citep{silver1998inventory}. When several of these probabilistic constraints are interrelated, the decision maker should conservatively approximate joint chance constraints. For this one can again use CVaR; see \citet{chen2010cvar,zymler2013distributionally,roos2020reducing}.
Adding ambiguous chance constraints to the models developed in this paper is a worthwhile topic for further research.






\end{document}